\newtheorem{theorem}{Theorem}
\newtheorem{lemma}{Lemma}
\theoremstyle{definition}
\newtheorem{definition}{Definition}
\newcommand{\bfu}{\boldsymbol{u}}
\newcommand{\bfv}{\boldsymbol{v}}
\newcommand{\bfx}{\boldsymbol{x}}
\newcommand{\bfy}{\boldsymbol{y}}
\newcommand{\bfA}{\boldsymbol{A}}
\newcommand{\bfB}{\boldsymbol{B}}
\newcommand{\bfG}{\boldsymbol{G}}
\newcommand{\bfI}{\boldsymbol{I}}
\newcommand{\bfP}{\boldsymbol{P}}
\newcommand{\bfQ}{\boldsymbol{Q}}
\newcommand{\bfU}{\boldsymbol{U}}
\newcommand{\bfV}{\boldsymbol{V}}
\newcommand{\bfX}{\boldsymbol{X}}
\newcommand{\bfY}{\boldsymbol{Y}}
\newcommand{\bftheta}{\boldsymbol{\theta}}
\newcommand{\bbR}{\mathbb{R}}
\newcommand{\cI}{\mathcal{I}}
\newcommand{\cU}{\mathcal{U}}
\newcommand{\tX}{\boldsymbol{\mathscr{X}}}
\newcommand{\tY}{\boldsymbol{\mathscr{Y}}}
\newcommand{\tG}{\boldsymbol{\mathscr{G}}}
\newcommand{\tM}{\boldsymbol{\mathscr{M}}}
\newcommand{\T}{{\!\top\!}}
\DeclareMathOperator*{\minimize}{minimize} 
\DeclareMathOperator*{\maximize}{maximize} 
\DeclareMathOperator{\tr}{Tr}
\DeclareMathOperator{\Proj}{Proj}
\DeclareMathOperator{\nnz}{nnz}
\DeclareMathOperator{\qr}{QR}
\newcommand{\mode}[1]{{(\!#1\!)\!}}
\DeclareMathOperator{\St}{St}
\DeclareMathOperator{\grad}{grad}
\DeclareMathOperator{\NewProd}{TTMcTC}
\title{HOQRI: Higher-order QR Iteration for Low Multilinear Rank Approximation of Large and Sparse Tensors}
\author{Yuchen Sun\thanks{Department of Computer and Information Science and Engineering, University of Florida, Gainesville, FL 32611 USA.}
\and Amit Bhat\thanks{Department of Mathematics, University of Florida, Gainesville, FL 32611 USA.}
\and Chunmei Wang\footnotemark[2]
\and Kejun Huang\footnotemark[1]
}
\begin{document}

\maketitle

\begin{abstract}
We propose a new algorithm called higher-order QR iteration (HOQRI) for computing low multilinear rank approximation (LMLRA), also known as the Tucker decomposition, of large and sparse tensors. Compared to the celebrated higher-order orthogonal iterations (HOOI), HOQRI relies on a simple orthogonalization step in each iteration rather than a more sophisticated singular value decomposition step as in HOOI. More importantly, when dealing with extremely large and sparse data tensors, HOQRI completely eliminates the intermediate memory explosion by defining a new sparse tensor operation called TTMcTC (short for tensor times matrix chains times core). Furthermore, recognizing that the orthonormal constraints form a Cartesian product of Stiefel manifolds, we introduce the framework of manifold optimization and show that HOQRI guarantees convergence to the set of stationary points. Numerical experiments on synthetic and real data showcase the effectiveness of HOQRI. 
\end{abstract}


\section{Introduction}
A tensor is a data array indexed by three or more indices. It has been proven to be extremely useful in numerous applications \cite{kolda2009tensor,comon2014tensors,anandkumar2014tensor,cichocki2015tensor,papalexakis2016tensors,sidiropoulos2017tensor}. Two of the fundamental tensor factorization models are the canonical polyadic decomposition (CPD) \cite{hitchcock1927expression,harshman1970foundations,carroll1970analysis} and the Tucker decomposition \cite{hitchcock1927expression,tucker1966some}. In this paper, we propose a new algorithm called higher-order QR iteration (HOQRI) for Tucker decomposition, which is particularly suitable with large and sparse data tensors.

The Tucker decomposition is highly related to the higher-order SVD (HOSVD) and the best rank-$(K_1,\ldots, K_N)$ approximation \cite{de2000multilinear,de2000best}, which has found applications in computer vision \cite{vasilescu2003multilinear}, multilinear subspace learning \cite{lu2008mpca,lu2011survey}, topic modeling \cite{anandkumar2013overcomplete}, to name just a few. The state-of-the-art algorithm for (approximately) computing the Tucker decomposition remains the higher-order orthogonal iteration (HOOI) \cite{kroonenberg2008applied}. However, HOOI is not without limitations when being used in practice. One general issue that exists for tensors of all sizes is that it requires a reliable subroutine to calculate the singular value decomposition (SVD) of a matrix---it would be nice if the algorithm does not rely on that in general. More importantly, when dealing with extremely large but sparse tensor data, HOOI generates very large and dense intermediate matrices in every iteration. Sometimes the intermediate memory explosion exceeds the original data tensor, making the subsequent computations almost impossible to carry on \cite{kolda2008scalable,jeon2015haten2,oh2017s,zhang2020fast}. Last but not least, there is still limited understanding of the convergence of HOOI. The best known result states that every limit point of HOOI is a stationary point if some block non-degeneracy assumption is satisfied, which is hard to verify in practice \cite{xu2018convergence}.

We aim to address all three issues with the proposition of HOQRI. First of all, HOQRI defines a new tensor operation called $\NewProd$ (short for tensor times matrix chains times core) that entirely eliminates the intermediate memory explosion. Furthermore, each iteration of HOQRI involves only an orthogonalization step, which can be efficiently (and explicitly) calculated using the QR factorization (via, for example, Gram-Schmidt), rather than a sophisticated SVD step. As a sneak peek of how much the per-iteration complexity is improved using the proposed HOQRI algorithm, we compare the elapsed time per iteration with some well-known implementations in Figure~\ref{BaselinesDimension}. Finally, recognizing that the orthonormal constraints form a Cartesian product of Stiefel manifolds, we introduce the framework of manifold optimization and show that \mbox{HOQRI} guarantees convergence to the set of stationary points. Numerical experiments on synthetic and real data showcase the effectiveness of HOQRI.

\begin{figure}[t]
\centering
\includegraphics[width=1\linewidth]{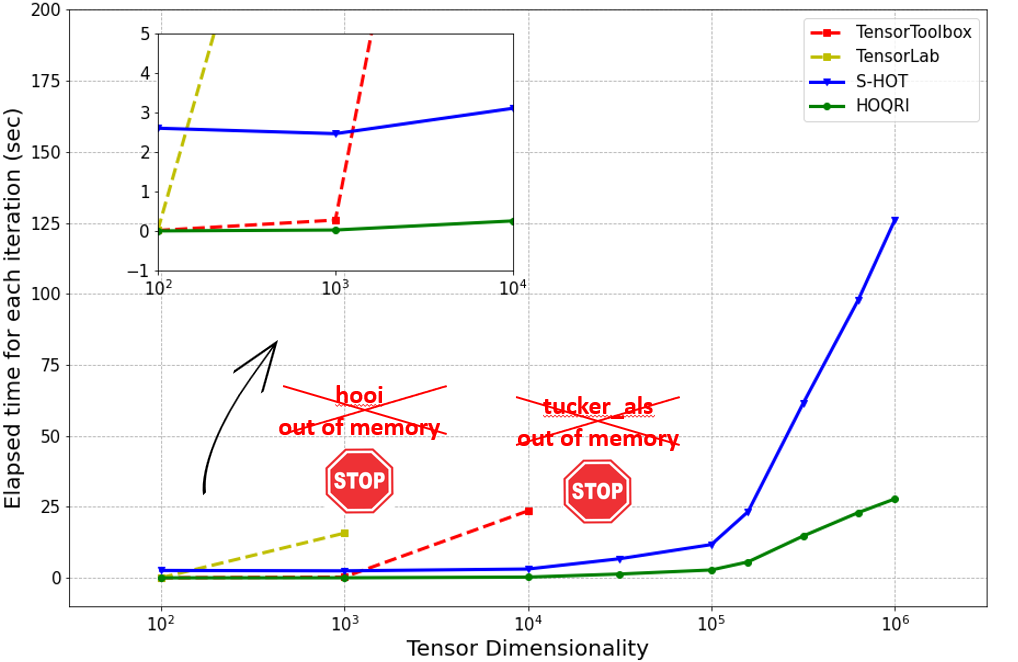}
\caption{Averaged per-iteration elapsed time of HOQRI comparing with Tensorlab \cite{tensorlab3.0}, Tensor Toolbox \cite{tensor_toolbox}, and S-HOT \cite{oh2017s} on randomly generated sparse tensors. The target multilinear rank for the Tucker decomposition is fixed at $10\times10\times10$, and the dimension of the tensor increases from $100\times100\times100$ to $10^6\times10^6\times10^6$. The number of nonzeros of each tensor is $10\times$ its number of rows.}
\label{BaselinesDimension}
\end{figure}

A preliminary version of the work has been published at ICASSP 2022 \cite{sun2022hoqri}. We made several improvements in this full-length journal version. First, the matrix version of implementing $\NewProd$ is slightly modified to save more per-iteration complexity. Second, we provide a novel convergence analysis of the algorithm based on the framework of manifold optimization. Our novel analysis shows that HOQRI is guaranteed to converge to a stationary point. Finally, we develop a C++ implementation to accompany the paper. In the conference version, we did not compare HOQRI with a baseline algorithm S-HOT \cite{oh2017s} since S-HOT was implemented in C++ as we did not consider it fair to compare with our preliminary MATLAB implementation; such promised comparison is fulfilled here to fully demonstrate the exceptional performance of HOQRI.

\section{Preliminaries}
\label{sec:prelim}

In this section, we define the notation used throughout the paper and also provide a brief overview of tensor factorization. For additional information about tensors and their factorizations, we direct the readers to the surveys by Kolda and Bader \cite{kolda2009tensor}, Papalexakis et al. \cite{papalexakis2017tensors}, and Sidiropoulos et al. \cite{sidiropoulos2017tensor}.

\subsection{Tensors and notations}

We denote the input $N$-way tensor, of size $I_1\times I_2\times\cdots\times I_N$, as $\tX$. In general, we denote tensors by boldface Euler script capital letters, e.g., $\tX$ and $\tY$, while matrices and vectors are denoted by boldface italic capital letters (e.g., $\bfX$ and $\bfY$) and boldface italic lowercase letters (e.g., $\bfx$ and $\bfy$), respectively. The Euclidean norm of a tensor $\tX$ is denoted as $\|\tX\|$, which is defined as
\[
\|\tX\| = \sqrt{\sum_{i_1=1}^{I_1}\cdots\sum_{i_N=1}^{I_N}\tX(i_1,...,i_N)^2}.
\]

\noindent
\textbf{Unfolding.}
A tensor can be unfolded, or \emph{matricized}, along any of its mode into a matrix. The tensor unfolding along the $n$th mode is denoted $\bfX_\mode{n}\in\bbR^{I_n\times\prod_{\nu\neq n}I_\nu}$. Simply put, the $n$th mode of $\tX$ forms the rows of $\bfX_\mode{n}$ and the remaining modes form the columns.

\noindent
\textbf{Tensor-matrix mode product.}
The mode-$n$ tensor-matrix product multiplies a tensor with a matrix along the $n$th mode. Suppose $\bfB$ is a $K\times I_n$ matrix, the $n$-mode tensor-matrix product, denoted as $\tX\times_n\bfB$, outputs a tensor of size $I_1\times\cdots\times I_{n-1}\times K\times I_{n+1}\times\cdots\times I_N$. Elementwise, 
\[
\big[\tX\times_n\bfB\big](i_1,...,i_{n\!-\!1},k,i_{n\!+\!1},...,i_N) = \sum_{i_n=1}^{I_N}\bfB(k,i_n)\tX(i_1,...,i_N).
\]
Using mode-$n$ unfolding, it can be equivalently written as
\[
\big[\tX\times_n\bfB\big]_\mode{n} = \bfB\bfX_\mode{n}.
\]
Note that the resulting tensor is in general dense regardless of the sparsity pattern of $\tX$.

A common task is to multiply a tensor by a set of matrices. This operation is called the tensor-times-matrix chain (TTMc). When multiplication is performed with all $N$ modes, it is denoted as \mbox{$\tX\times\{\bfB\}$}, where $\{\bfB\}$ is the set of $N$ matrices $\bfB^\mode{1},...,\bfB^\mode{N}$. Sometimes the multiplication is performed with all modes \emph{except one}. This is denoted as $\tX\times_{-n}\{\bfB\}$, where $n$ is the mode not being multiplied:
\[
\tX\times_{-n}\{\bfB\} = \tX\times_1\bfB^\mode{1}\cdots\times_{n-1}\bfB^\mode{n-1}\times_{n+1}\bfB^\mode{n+1}\cdots\times_N\bfB^\mode{N}.
\]

\noindent
\textbf{Kronecker product.}
The Kronecker product (KP) of $\bfA\in\bbR^{\ell\times m}$ and $\bfB\in\bbR^{p\times q}$, denoted as $\bfA\otimes\bfB$, is an $\ell p\times mq$ matrix defined as
\[
\bfA\otimes\bfB = \begin{bmatrix}
\bfA(1,1)\bfB & \cdots & \bfA(1,m)\bfB \\
\vdots & \ddots & \vdots \\
\bfA(\ell,1)\bfB & \cdots & \bfA(\ell,m)\bfB
\end{bmatrix}.
\]
Mathematically, the $n$-mode TTMc can be equivalently written as the product of mode-$n$ unfolding times a chain of Kronecker products:
\begin{align}\label{eq:naive}
&\big[\tX\times_{-n}\{\bfB\}\big]_\mode{n} \nonumber \\
&= \bfX_\mode{n} 
\left(\bfB^\mode{1}\otimes\cdots\otimes\bfB^\mode{n-1}\otimes\bfB^\mode{n+1}\otimes\cdots\otimes\bfB^\mode{N}\right)^\T.
\end{align}

More notations are shown in Table \ref{tab:notation}.
\begin{table}[t]
\centering
\caption{List of notations}\label{tab:notation}
\begin{tabular}{l|l}
	\hline
	Symbol & Definition \\
	\hline
	$N$		& number of modes \\
	$\tX$	& $N$-way data tensor of size $I_1\times I_2 \times \cdots \times I_N$\\
	$\tX(i_1,...,i_N)$ & $(i_1,...,i_N)$-th entry of $\tX$ \\
	$\cI(\tX)$ & index set of all nonzero entries in $\tX$ \\
	$\cI_{i_n}^\mode{n}(\tX)$ & subset of $\cI(\tX)$ where the $n$-mode index is $i_n$ \\
	$\bfX_\mode{n}$ & mode-$n$ matrix unfolding of $\tX$ \\
	$I_n$ & dimension of the $n$th mode of $\tX$ \\
	$K_n$ & multilinear rank of the $n$th mode\\
	$\tG$ & core tensor of the Tucker model $\in\bbR^{K_1\times...\times K_N}$\\
	$\bfU^\mode{n}$ & mode-$n$ factor of the Tucker model $\in\bbR^{I_n\times K_n}$\\
	$\{\bfU\}$ & set of all factors $\{\bfU^\mode{1},...,\bfU^\mode{N}\}$ \\
	$\times_{n}$ & $n$-mode tensor-matrix product \\
	$\times_{-n}$ & chain of mode products except the $n$th one \\
	\hline
\end{tabular}
\end{table}

\subsection{Tucker decomposition}\label{sec:tucker}

The goal of Tucker decomposition is to approximate a data tensor $\tX\in\bbR^{I_1\times\cdots\times I_N}$ with the product of a core tensor $\tG\in\bbR^{K_1\times\cdots\times K_N}$ and a set of $N$ factor matrices $\bfU^\mode{n}\in\bbR^{I_n\times K_n}, n=1,...,N$, i.e., $\tX\approx\tG\times\{\bfU\}$. An illustration of Tucker decomposition for 3-way tensors is shown in Figure~\ref{fig:tucker}.
To find the Tucker decomposition of a given tensor $\tX\in\bbR^{I_1\times\cdots\times I_N}$ and a target reduced dimension $K_1\times\cdots\times K_N$, one formulates the following problem:
\begin{equation}\label{prob:tucker}
\minimize_{\substack{\tG\in\bbR^{K_1\times\cdots\times K_N}\\ \{\bfU^\mode{n}\in\bbR^{I_n\times K_n}\}_{n=1}^N}}~~ 
\left\| \tX - \tG\times\{\bfU\}\right\|^2.
\end{equation}

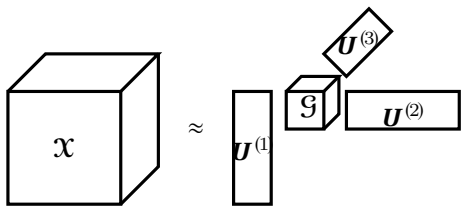
\begin{figure}[t]
	\centering
	\begin{tikzpicture}
\draw[very thick] (0,0) -- (0,1.5) -- (1.5,1.5) -- (1.5,0) -- cycle;
\draw[very thick] (0,1.5) -- (0.5,2) -- (2,2) -- (1.5,1.5);
\draw[very thick] (2,2) -- (2,0.5) -- (1.5,0);

\node at (0.75,0.75) {$\tX$};

\node at (2.5,1) {$\approx$};

\draw[very thick] (3,0) -- (3.5,0)-- (3.5,1.5) -- (3,1.5) -- cycle;
\node at (3.25,0.75) {$\bfU^\mode{1}$};

\draw[very thick] (3.7,1) -- (4.2,1) -- (4.2,1.5) -- (3.7,1.5) -- cycle;
\draw[very thick] (3.7,1.5) -- (3.9,1.7) -- (4.4,1.7) -- (4.2,1.5);
\draw[very thick] (4.4,1.7) -- (4.4,1.2) -- (4.2,1);
\node at (4,1.3) {$\tG$};

\draw[very thick] (4.5,1) -- (6,1) -- (6,1.5) -- (4.5,1.5) -- cycle;
\node at (5.25,1.2) {$\bfU^\mode{2}$};

\draw[very thick] (4.5,1.7) -- (4.2,2) -- (4.8,2.6) -- (5.1,2.3) -- cycle;
\node at (4.65,2.15) {$\bfU^\mode{3}$};
\end{tikzpicture}
	\caption{Tucker decomposition of a 3-way tensor}
	\label{fig:tucker}
\end{figure}

Because of the rotational ambiguities, one can impose the constraints that the columns of each $\bfU^\mode{n}$ are orthonormal without loss of generality, leading to the formulation:
\begin{align*}
\minimize_{\tG,\{\bfU\}}~~& \left\| \tX - \tG\times\{\bfU\} \right\|^2 \\
\textup{subject to}~~ & ~\bfU^{\mode{n}\,\T}\bfU^\mode{n} = \bfI, n=1,...,N.
\end{align*}
It can be shown that the optimal $\tG$ should be taken as 
\(
\tG = \tX \times \{\bfU^\T\},
\)
and plugging it back leads to the simplified formulation~\cite{de2000best} by eliminating the variable $\tG$:
\begin{equation}\label{prob:main}
\begin{aligned}
\maximize_{\{\bfU\}}~~~& \left\| \tX \times \left\{\bfU^\T\right\} \right\|^2 \\
\textup{subject to}~~& ~ \bfU^{\mode{n}\,\T}\bfU^\mode{n} = \bfI, n=1,...,N.
\end{aligned}
\end{equation}

A well-known algorithmic framework for approximately solving~\eqref{prob:main} is the higher-order orthogonal iteration (HOOI) \cite{kroonenberg2008applied,de2000best}, which takes a block coordinate descent approach and updates each $\bfU^\mode{n}$ in a cyclic fashion. Fixing all variables except $\bfU^\mode{n}$, it is shown that a conditionally optimal update for $\bfU^\mode{n}$ can be obtained by taking the $K_n$ leading left singular vectors of $\bfY_\mode{n}$, which is obtained by taking the $n$-mode unfolding of the tensor $\tY\triangleq\tX\times_{-n}\{\bfU^\T\}$. A detailed description of HOOI is given in Algorithm~\ref{alg:hooi}. Solving~\eqref{prob:main} exactly is NP-hard~\cite{hillar2013most}, but the HOOI algorithm is guaranteed to monotonically increase the objective of~\eqref{prob:main}, and in a lot of cases finds a good Tucker decomposition. When computationally viable, HOOI is usually initialized with the leading components of the higher-order SVD (HOSVD) of $\tX$~\cite{de2000multilinear}. A variant of HOOI is proposed in \cite{chen2015optimal} using the maximum block improvement (MBI) framework: instead of updating the $N$ factors in a cyclic fashion, MBI updates the one that leads to the most improvement. The benefit is that it guarantees convergence to a stationary point of \eqref{prob:main}, at the cost of a lot of additional computations in order to find the best factor at every step. Recently, Xu \cite{xu2018convergence} provides a more comprehensive analysis of the original HOOI algorithm, which we explain in detail in Section~\ref{sec:statement}.

\begin{algorithm}[t]
	\caption{Higher-Order Orthogonal Iteration (HOOI)}
	\label{alg:hooi}
	\begin{algorithmic}[1]
		\STATE initialize all $\bfU^\mode{n}$
		\COMMENT{randomly or from HOSVD}
		\REPEAT
		\FOR{$n=1,...,N$}
		\STATE $\tY~~\;\leftarrow\tX\times_{-n}\{\bfU^\T\}$
		\STATE $\bfU^\mode{n}\leftarrow$ $K_n$ leading left singular vectors of $\bfY_\mode{n}$
		\ENDFOR
		\UNTIL{convergence}
		\STATE $\tG\leftarrow\tX\times\{\bfU^\T\}$
	\end{algorithmic}
\end{algorithm}

Several other algorithms exist in addition to HOOI and its variants. Some directly tackles the fitting problem \eqref{prob:tucker} without imposing the orthonormal constraints, for example by using alternating least squares \cite{jeon2015haten2}, gradient descent \cite{Traore2019Singleshot}, or nonlinear least squares \cite{sorber2015structured}.
On the other hand, several algorithms already recognized that the orthogonality constrained problem \eqref{prob:main} falls into the framework of manifold optimization, so existing manifold optimization algorithms can be applied. Most of them focus on manifold variants of second-order algorithms, such as Newton's method \cite{ishteva2009differential,elden2009newton}, quasi-Newton algorithm \cite{savas2010quasi}, trust-region method \cite{ishteva2011best}, and nonlinearly preconditioned solvers \cite{de2016nonlinearly}. Some of them do provide convergence guarantee to a stationary point. However, these second-order algorithms are very hard to scale up to handle large and sparse tensor data that we commonly face nowadays.
Finally, to specifically address the large-scale problem, strategies such as stochastic gradient descent \cite{li2020sgd} and sketching \cite{malik2018low} have also been proposed; these approximation algorithms are beyond the scope of this work and therefore will not be considered further.

\section{Problem Statement}\label{sec:statement}

There are several computational issues regarding HOOI, especially when the data tensor $\tX$ is large and sparse. The immediate issue people noticed is that a naive implementation to calculate $\tX\times_{-n}\{\bfU^\T\}$ may require an enormous amount of memory due to intermediate dense data storage according to the explicit matrix product \eqref{eq:naive}. This issue is somewhat resolved by exploiting the sparsity structure of the data tensor ensuring that the intermediate memory load is no more than the maximum of $\tX$ and any of the $\tY$'s~\cite{kolda2008scalable,jeon2015haten2,smith2017tucker}. 

As the order of the tensor increases nowadays, it has been observed that even the amount of memory to record $\tY$ may not be practical for scalable Tucker decomposition. One way is to avoid explicitly materializing $\tY$ while still trying to find its leading singular vectors, as suggested by the S-HOT approach~\cite{oh2017s}. All the previous works pertain to the framework of HOOI, which requires computing the singular value decomposition (SVD) of some large and dense matrices as a sub-routine. This may be an issue as well, as there is not a single best way to implement SVD for all cases~\cite{golub1996matrix}, and may require inner iterations to compute the dominant singular vectors. The memory and computation complexities of these Tucker decomposition algorithms are compared in Table \ref{memory complexity}.

\begin{table}
\centering
\caption{Summary of per-iteration memory/computation complexities of various scalable Tucker decomposition algorithms. For simplicity, we assume $I_n=I$ and $K_n=K$ for all $n$. We calculate the \emph{intermediate} memory requirement for all methods, assuming there are already spaces allocated for the data $\tX$ and the solution $\tG$ and $\{\bfU\}$, which takes space $\nnz(\tX)+NIK+K^N$.}
\label{memory complexity}
\begin{tabular}{l|cc}
\hline
		& memory & computation \\
\hline
\\[-5pt]
Kronecker product \eqref{eq:naive} & $I^{N-1}K^{N-1}$	&  $I^NK^{N-1}+IK^N$ \\
MET \cite{kolda2008scalable} / TTMc \cite{smith2017tucker} & $IK^{N-1}$	& $\nnz(\tX)K^{N-1}+IK^N$\\
S-HOT \cite{oh2017s} & $K^{N-1}$ 	& $\nnz(\tX)K^{N-1}+IK^N$ \\
HOQRI (this paper) & $IK$ & $\nnz(\tX)K^N$ or\\
 & & $\nnz(\tX)K^{N-1}+IK^N$ \\
\hline
\end{tabular}

\end{table}

In terms of convergence, the best one has claimed for HOOI is a limited result that a limit point is a stationary point if it is block non-degenerate \cite{xu2018convergence}. Since the problem formulation \eqref{prob:main} is nonconvex, it is almost impossible to claim convergence to a global optimum, but it is often expected that an algorithm converges to a stationary point. This has unfortunately not been established for the case of HOOI, even though it falls into the general framework of block coordinate descent (BCD). It is shown that BCD converges to a stationary point if 1) each block of variables is constrained in a \emph{convex} set, and 2) each block update is resulted from a \emph{unique} minimizer \cite[Proposition 3.7.1]{bertsekas2016nonlinear}, neither of which is satisfied by HOOI. Even if we abandon the orthonormal constraint to make the constraint sets convex, convergence still cannot be guaranteed since the second requirement of a unique minimizer is an impractical assumption. Xu \cite{xu2018convergence} managed to show that every limit point of HOOI is a stationary point, but under the assumption that the limit points are ``nondegenerate'', meaning that the $K_n$th and $K_{n+1}$th singular values of $\bfY_\mode{n}$ are distinct for all $n=1,\ldots,N$. Intuitively, this requirement is not very strict, but it also means that it is possible that HOOI converges to a degenerate point, which is almost certainly not a stationary point.

\section{Proposed Algorithm: HOQRI}

In this paper, we propose a new algorithmic framework for Tucker decomposition called higher-order QR iteration (HOQRI), which resolves the three issues we mentioned for HOOI: It is free from calling SVD, and its memory requirement is minimal (no larger than that of storing the solution $\{\bfU\}$ and $\tG$). In the next section, we also show that HOQRI converges to the set of stationary points of \eqref{prob:main}, which has not been established for HOOI.

\subsection{Algorithm description}

An implementation of the proposed HOQRI is shown in Algorithm~\ref{alg:hoqri}. As we can see, it is a very simple algorithm with a few but very important modifications to the existing HOOI. The only ambiguous part of Algorithm~\ref{alg:hoqri} is in line 4, where we define a new tensor-matrix operation called tensor times matrix chain times core (TTMcTC). Mathematically, it is simply the product of TTMc \cite{smith2017tucker} and the corresponding matricized core-tensor:
\begin{equation}\label{eq:ttmctc}
\bfA^\mode{n} = \bfY_\mode{n}\bfG_\mode{n}^\T,
\end{equation}
where $\bfY_\mode{n}$ is the mode-$n$ matricization of the tensor $\tY=\tX\times_{-n}\{\bfU^\T\}$ as in line 4 of HOOI, and $\bfG_\mode{n}$ is the mode-$n$ matricization of the core tensor $\tG$. The reason this operation is described as a subroutine in Algorithm~\ref{alg:hoqri} is that the intermediate $\tY$ is never explicitly materialized when the data tensor is large and sparse---the result is directly calculated without additional memory overheads as will be explained in the sequel. This is the key step that renders the huge amount of savings in memory, thus making it most scalable.

In terms of per-iteration complexity, it is clear that it is dominated by the novel $\NewProd$ kernel. As will be explained in detail next, depending on the specific implementation, the complexity of $\NewProd$ is either $O(\nnz(\tX)K_1\cdots K_N)$ or $O(\nnz(\tX)K_1\cdots K_N/K_n+I_nK_1\cdots K_N)$. Although it does not seem a big improvement compared to its predecessors, in practice it is much faster thanks to the much smaller constant.
However, the biggest benefit is that it does not require additional memory to store the large and dense tensor $\tY$. Moreover, HOOI requires calculating the $K_n$ leading singular vectors of $\bfY_\mode{n}$ , which in general requires $O(I_nK_1\cdots K_N)$ complexity with a big constant in front, while HOQRI follows with a simple QR factorization of $\bfA^\mode{n}$ of size $I_n\times K_n$ with $O(I_nK_n^2)$ flops, which is negligible compared to $\NewProd$.

\begin{algorithm}[t]
\caption{Higher-Order QR Iteration (HOQRI)}
\label{alg:hoqri}
\begin{algorithmic}[1]
\STATE initialize all $\bfU^\mode{n}$
\COMMENT{randomly or from HOSVD}
\REPEAT
\FOR{$n=1,...,N$}
\STATE $\bfA^\mode{n}\leftarrow\NewProd(\tX,\{\bfU\},n)$
\STATE $\bfU^\mode{n}\leftarrow$ an orthonormal basis of $\bfA^\mode{n}$ via $\qr$ 
\ENDFOR
\UNTIL{convergence}
\STATE $\tG\leftarrow\tX\times\{\bfU^\T\}$
\end{algorithmic}
\end{algorithm}

\subsection{The TTMcTC kernel}

In Algorithms~\ref{alg:ttmctc1} and \ref{alg:ttmctc2}, we provide two detailed implementations of $\NewProd$ that are efficient for a large and sparse data tensor $\tX$. The first one given in Algorithm~\ref{alg:ttmctc1} is more straight-forward as it computes $\bfA^\mode{n}=\bfY_\mode{n}\bfG_\mode{n}^\T$ element by element. Specifically, the very first step of Algorithm~\ref{alg:ttmctc1} calculates $\tG$, which can be done with $O(\nnz(\tX)K_1\cdots K_N)$ flops. Then each element of $\bfA^\mode{n}$ is calculated as the inner product of the $i_n$th row of $\bfY_\mode{n}$ and $k_n$th row of $\bfG_\mode{n}$, but that row of $\bfY_\mode{n}$ is calculated on-the-fly without excessive memory overheads. All index $i_n$ and $k_n$, $n=1,\ldots,N$, is traversed exactly once, leading to another $O(\nnz(\tX)K_1\cdots K_N)$ complexity. Therefore, the overall complexity of Algorithm~\ref{alg:ttmctc1} is $O(\nnz(\tX)K_1\cdots K_N)$. This implementation is more appropriate for a lower-level language such as C++, in which case multiple for-loops do not slow down the execution. 

We also propose a different implementation in Algorithm~\ref{alg:ttmctc2} that involves fewer for-loops, which makes it more suitable for the MATLAB environment when matrix operations are much more efficient than explicit for-loops. We first notice that $\bfG_\mode{n}=(\bfU^\mode{n})^\T\bfY_\mode{n}$, therefore
\[
\bfA^\mode{n}=\bfY_\mode{n}\bfG_\mode{n}^\T
=\bfY_\mode{n}\bfY_\mode{n}^\T\bfU^\mode{n}
=\sum_{k}\bfy_k\bfy_k^\T\bfU^\mode{n},
\]
where $\bfy_k$ denotes the $k$th column of $\bfY_\mode{n}$. This allows us to calculate one column of $\bfY_\mode{n}$ at a time and form $\bfA^\mode{n}$ in a recursive manner. Computing each $\bfy_k$ is the mode-product of different columns of $\{\bfU\}$, so the complexity is $O(\nnz(\tX))$. The result is a vector of length $I_n$, so forming $\bfy_k\bfy_k^\T\bfU^\mode{n}$ requires $O(I_nK_n)$ flops. Summing over all $k_{-n}$ leads to the $O(\nnz(\tX)K_1\cdots K_N/K_n+I_nK_1\cdots K_N)$ complexity.

At first glance, it might come as somewhat unexpected that the complexity of Algorithms~\ref{alg:ttmctc1} and \ref{alg:ttmctc2} are different, but it is entirely reasonable. Both implementations tries to calculate $\bfA^\mode{n}
=\bfY_\mode{n}\bfY_\mode{n}^\T\bfU^\mode{n}$, and we know that even for elementary matrix multiplication, the complexity may be different depending on which of the two matrices are multiplied first. Algorithm~\ref{alg:ttmctc1} obviously calculates $\bfG_\mode{n}^\T=\bfY_\mode{n}^\T\bfU^\mode{n}$ first followed by $\bfA^\mode{n}=\bfY_\mode{n}\bfG_\mode{n}^\T$. Algorithm~\ref{alg:ttmctc2}, on the other hand, is essentially calculating $\bfY_\mode{n}\bfY_\mode{n}^\T$ first, which leads to the difference in complexities. In practice, the programming environment plays a much more important role in determining which one is more efficient.

\begin{algorithm}[t]
\caption{$\NewProd(\tX,\{\bfU\},n)$ ~~ element-wise}
\label{alg:ttmctc1}
\begin{algorithmic}[1]
\STATE $\tG\leftarrow\tX\times\{\bfU^\T\}$
\FOR{$i_n=1,...,I_n$}
\FOR{$k_n=1,...,K_n$}
\STATE \(\displaystyle
\bfA^\mode{n}(i_n,k_n)=\hspace*{-5pt}
\sum^{\textcolor{white}{I}}_{i_{\!-\!n}\in\cI_{i_n}^\mode{n}(\tX)}
\sum^{\textcolor{white}{K}}_{k_{\!-\!n}}
\tX(i_1,...,i_N)\tG(k_1,...,k_N)
 \times \prod_{\nu\neq n}\bfU^\mode{\nu}(i_\nu,k_\nu) \)
\ENDFOR
\ENDFOR
\RETURN $\bfA^\mode{n}$
\end{algorithmic}
\end{algorithm}

\begin{algorithm}[t]
\caption{$\NewProd(\tX,\{\bfU\},n)$ ~~ matrix version}
\label{alg:ttmctc2}
\begin{algorithmic}[1]
\STATE $\bfA^\mode{n}\leftarrow0$
\FOR{$k_{-n}$}
\STATE $\bfy = \tX\times_{-n}\{\bfu_{k_1},\ldots,\bfu_{k_N}\}$
\COMMENT{sparse mode product}
\STATE $\bfA^\mode{n}\leftarrow\bfA^\mode{n} + \bfy\bfy^\T\bfU^\mode{n}$\\
\ENDFOR
\RETURN $\bfA^\mode{n}$
\end{algorithmic}
\end{algorithm}

\subsection{Intuition}
Before we move on to prove convergence of HOQRI, we briefly describe the intuition behind the design of HOQRI here. As we explained in the introduction section, a preliminary version of the algorithm has appeared \cite{sun2022hoqri}, which is not exactly the same as Alg.~\ref{alg:hoqri} proposed in this paper. We will explain the difference and the reason behind the modification.

In line 5 of HOOI, we are supposed to calculate the $K_n$ leading singular vectors of $\bfY_\mode{n}$, or equivalently, the $K_n$ leading eigenvectors of $\bfY_\mode{n}\bfY_\mode{n}^\T$. One way of computing it is to apply the orthogonal iteration \cite[\S8.2.4]{golub1996matrix} which iteratively calculates the matrix product $\bfY_\mode{n}\bfY_\mode{n}^\T\bfU^\mode{n}$ and then orthogonalized the columns via the QR factorization. Notice that $\bfY_\mode{n}^\T\bfU^\mode{n}$ effectively calculates the mode-$n$ matricization of the core tensor $\tG$. On the other hand, instead of applying it multiple times to converge to the $K_n$ leading eigenvectors, we propose to calculate it only once per iteration, using the factor $\bfU^\mode{n}$ obtained from the previous iteration. As we will show next, this seemingly ad hoc modification not only greatly reduces the memory and computational overhead in each iteration, but also guarantees of convergence to a stationary point.

\section{Convergence Analysis}

In this section we provide a convergence analysis of HOQRI from the perspective of manifold optimization, since the orthogonality constraint is a smooth manifold called the Stiefel manifold, and the full set of constraint for the Tucker formulation \eqref{prob:main} is a Cartesian product of several Steifel manifolds, which remains a manifold. After showing that HOQRI guarantees monotonic improvement of the objective value, we briefly introduce the basic concepts in manifold optimization---for more details, cf. a standard textbook on the subject, e.g., \cite{absil2009optimization,boumal2020introduction}. HOQRI does not fall into the framework of any classical manifold optimization algorithm such as the Riemannian gradient descent, yet thanks to the special structure of the Tucker decomposition problem, we can show that it converges to a stationary point, meaning the Riemannian gradient goes to zero.

Throughout this section, we formally define the iterates generated by HOQRI as 
\[
\{\bfU_t\}=\{\bfU^\mode{1}_{t},\ldots,\bfU^\mode{N}_{t}\};
\]
at iteration $t$, only the $n$th factor $\bfU^\mode{n}$, where $n=(t\!\!\mod N)+1$, is updated, while other factor matrices remain the same, i.e.,
\begin{equation}\label{eq:iteration}
\bfU^\mode{n}_t=\begin{cases}
\text{QR of } \bfA^\mode{n}_{t-1}, &n=(t\!\!\!\mod N)+1, \\
\bfU^\mode{n}_{t-1}, & \text{otherwise},
\end{cases}
\end{equation}
where $\bfA^\mode{n}_{t-1}$ is obtained from $\NewProd(\tX,\{\bfU_{t-1}\},n)$. We also denote the objective function of \eqref{prob:main} as $f(\{\bfU\})$.

\subsection{Objective convergence}
We start by showing the simpler result that HOQRI guarantees that the objective function \eqref{prob:main} is monotonically nondecreasing, similar to what has been established to the celebrated HOOI \cite{de2000best}---this does not necessarily mean that the factor matrices $\{\bfU\}$ are guaranteed to converge to a stationary point. At first glance, it may not seem apparent that HOQRI also guarantees this property, which we show in the sequel.

\begin{theorem}\label{thm:obj}
The iterates generated from the HOQRI algorithm \eqref{alg:hoqri}, $\{\bfU_t\}$, guarantees that
\[
f(\{\bfU_{t+1}\})\geq f(\{\bfU_t\}),\quad\text{for all~} t=1,2,\ldots.
\] 
\end{theorem}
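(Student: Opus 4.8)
The plan is to reduce the global statement to a single-block inequality and then to a matrix trace inequality capturing one step of subspace iteration. First I would note that from $\{\bfU_t\}$ to $\{\bfU_{t+1}\}$ exactly one factor, say the $n$th, is changed (cf.\ \eqref{eq:iteration}), so it suffices to show that this one update never decreases $f$. Holding all factors except $\bfU^\mode{n}$ at their current values and writing $\bfM=\bfY_\mode{n}\bfY_\mode{n}^\T\succeq0$, where $\bfY_\mode{n}$ is the mode-$n$ unfolding of $\tY=\tX\times_{-n}\{\bfU^\T\}$ (which does \emph{not} depend on $\bfU^\mode{n}$), the objective becomes $f=\|(\bfU^\mode{n})^\T\bfY_\mode{n}\|_F^2=\tr\bigl(\bfU^{\mode{n}\T}\bfM\bfU^\mode{n}\bigr)$. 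The point that makes HOQRI work is that the TTMcTC output obeys $\bfA^\mode{n}=\bfY_\mode{n}\bfG_\mode{n}^\T=\bfY_\mode{n}\bfY_\mode{n}^\T\bfU^\mode{n}=\bfM\bfU^\mode{n}$, using $\bfG_\mode{n}=(\bfU^\mode{n})^\T\bfY_\mode{n}$; hence the update $\bfU^\mode{n}\leftarrow\qr(\bfA^\mode{n})$ is precisely \emph{one} step of orthogonal iteration applied to the positive semidefinite matrix $\bfM$ from the current $\bfU^\mode{n}$.

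Everything then rests on the lemma I would prove next: for $\bfM\succeq0$ and $\bfU$ with orthonormal columns, if $\bfU_+$ is an orthonormal basis of $\bfM\bfU$, then $\tr(\bfU_+^\T\bfM\bfU_+)\geq\tr(\bfU^\T\bfM\bfU)$. To prove it I would pass to the Rayleigh trace of the range. Writing $\bfU_+\bfU_+^\T=\bfP_+$ for the orthogonal projector onto $\mathrm{range}(\bfM\bfU)$ and introducing the moment matrices $\bfS_j\triangleq\bfU^\T\bfM^j\bfU$, the identity $\bfP_+=\bfM\bfU\,\bfS_2^{-1}\,\bfU^\T\bfM$ gives $\tr(\bfU_+^\T\bfM\bfU_+)=\tr(\bfM\bfP_+)=\tr(\bfS_3\bfS_2^{-1})$, while $\tr(\bfU^\T\bfM\bfU)=\tr(\bfS_1)$. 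Thus the lemma is equivalent to the trace inequality $\tr(\bfS_2^{-1}\bfS_3)\geq\tr(\bfS_1)$, i.e.\ to monotonicity of $\phi(j)\triangleq\tr(\bfS_j^{-1}\bfS_{j+1})$ from $j=0$ to $j=2$.

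The engine for this inequality is a block-Hankel semidefiniteness fact together with a Schur complement. Writing $\bfM=\sum_i\lambda_i\bfv_i\bfv_i^\T$ with $\lambda_i\geq0$ and setting $\bfg_i=\bfU^\T\bfv_i$ gives $\bfS_j=\sum_i\lambda_i^j\bfg_i\bfg_i^\T$, and each summand of $\bigl[\begin{smallmatrix}\bfS_j&\bfS_{j+1}\\\bfS_{j+1}&\bfS_{j+2}\end{smallmatrix}\bigr]$ equals $\lambda_i^j\bigl(\begin{smallmatrix}1\\\lambda_i\end{smallmatrix}\bigr)\bigl(\begin{smallmatrix}1&\lambda_i\end{smallmatrix}\bigr)\otimes(\bfg_i\bfg_i^\T)\succeq0$, so the $2K\times2K$ block matrix is positive semidefinite. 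The Schur complement with respect to the top-left block yields $\bfS_{j+2}\succeq\bfS_{j+1}\bfS_j^{-1}\bfS_{j+1}$; multiplying by the positive semidefinite $\bfS_{j+1}^{-1}$ and taking traces (using that the trace of a product of two positive semidefinite matrices is nonnegative) gives $\phi(j+1)\geq\phi(j)$. Chaining $\phi(2)\geq\phi(1)\geq\phi(0)=\tr(\bfS_1)$ proves the lemma, and summing the per-block gains over the $N$ modes and over $t$ yields the theorem.

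I expect the main obstacle to be the degenerate case in which $\bfM\bfU$ (equivalently $\bfA^\mode{n}$) loses column rank: then $\bfS_1,\bfS_2$ are singular, the inverses and the projector identity above break down, and ``an orthonormal basis of $\bfM\bfU$'' no longer supplies $K_n$ columns, so the QR step itself needs interpretation. I would handle this by first assuming $\bfA^\mode{n}$ has full rank $K_n$ (the generic, well-posed case, which is exactly what makes $\bfS_1,\bfS_2\succ0$, since full column rank of $\bfM\bfU$ forces $\|\bfM^{1/2}\bfU\bfx\|>0$ for all $\bfx\neq0$), and then either invoking Moore--Penrose pseudoinverses with $\bfP_+$ the projector onto the lower-dimensional $\mathrm{range}(\bfM\bfU)$, or a continuity argument. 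The semidefinite block-Hankel step is routine once arranged; the genuine care lies in this rank condition and in matching the time indices so that $\bfM$ is held fixed across the single block update.
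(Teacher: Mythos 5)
Your proposal is correct in substance but takes a genuinely different route from the paper. The paper treats the block subproblem $\maximize_{\bfU^\T\bfU=\bfI}\|\bfU^\T\bfY_{\mode{n}\,t}\|^2$ as a convex quadratic maximization, lower-bounds it by its first-order Taylor expansion at $\bfU_t^\mode{n}$, maximizes that \emph{linear} surrogate over the Stiefel manifold in closed form via the Procrustes rotation $\bfQ_t\bfV_t^\T$ built from the SVD of $\bfA_t^\mode{n}$, and then observes that the QR-based update spans the same column space as $\bfQ_t\bfV_t^\T$ and hence attains the same objective value; the chain $f(\{\bfU_t\})=\hat f_t(\bfU_t^\mode{n})\le\hat f_t(\bfQ_t\bfV_t^\T)\le\|\bfY_{\mode{n}\,t}^\T\bfQ_t\bfV_t^\T\|^2=f(\{\bfU_{t+1}\})$ gives monotonicity. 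You instead identify the update as one step of orthogonal iteration on $\bfM=\bfY_\mode{n}\bfY_\mode{n}^\T$ and prove directly that the Rayleigh trace is nondecreasing, via the moment matrices $\bfS_j=\bfU^\T\bfM^j\bfU$, the block-Hankel positive semidefiniteness, and Schur complements; I checked the identities $\tr(\bfU_+^\T\bfM\bfU_+)=\tr(\bfS_3\bfS_2^{-1})$ and the chain $\phi(2)\ge\phi(1)\ge\phi(0)=\tr(\bfS_1)$, and they are sound under your standing full-rank assumption. What the paper's route buys is reusability: the linear minorant and the Procrustes maximizer are exactly the objects used again in the convergence proof of Theorem~\ref{thm:converge} to turn the per-iteration objective gain into an upper bound on the Riemannian gradient norm, so the simpler argument is not just shorter but load-bearing. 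What your route buys is a self-contained, classical statement (monotonicity of subspace iteration) proved without invoking the Procrustes solution or any surrogate function. The one loose end you correctly flag is the rank-deficient case, where $\bfS_1,\bfS_2$ are singular and QR must be interpreted with an arbitrary orthonormal completion; note that the fix is easier than pseudoinverses or continuity: since $\bfM\succeq0$, padding $\bfU_+$ with extra orthonormal columns can only increase $\tr(\bfM\bfU_+\bfU_+^\T)$, so it suffices to run your argument on the rank-$r$ part. (The paper's own proof has the same wrinkle, since the Procrustes solution and the ``same subspace'' claim are also only clean when $\bfA_t^\mode{n}$ has full column rank, so you are not at a disadvantage here.)
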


\begin{proof}
We prove it via the following process:
\begin{subequations}\label{eq:thm1}
\begin{align}
f(\{\bfU_t\})&=\hat{f}_t(\bfU_t^\mode{n}) \label{eq:thm1a}\\
&\leq \hat{f}_t(\bfQ_t\bfV_t^\T) \label{eq:thm1b}\\
&\leq \|\bfY_{\mode{n}\,t}^\T\bfQ_t\bfV_t^\T\|^2 \label{eq:thm1c}\\
&= f(\{\bfU_{t+1}\}), \label{eq:thm1d}
\end{align}
\end{subequations}
where the definitions of $\hat{f}$ and $\bfQ_t\bfV_t^\T$ will be given in the sequel.

At iteration $t+1$, when fixing all the other factors $\bfU^\mode{1}_t,\ldots,\bfU^\mode{n-1}_t$, $\bfU^\mode{n+1}_t,\ldots,\bfU^\mode{N}_t$, the subproblem
\begin{align}\label{eq:sub}
\maximize_{\bfU}~ \left\| \bfU^\T\bfY_{\mode{n}\,t} \right\|^2 \qquad \textup{subject to} ~ \bfU^\T\bfU = \bfI
\end{align}
is a convex quadratic maximization problem, where $\bfY_{\mode{n}\,t}$ is the mode-$n$ matrix unfolding of $\tX\times_{-n}\{\bfU_t\}$. For convex quadratics, the first-order Taylor approximation gives a global lower bound of the objective function, i.e.,
\begin{align}\label{eq:lower}
\left\| \bfU^\T\bfY_{\mode{n}\,t} \right\|^2 &\geq 
\left\| \bfU_t^{\mode{n}\,\T}\bfY_{\mode{n}\,t} \right\|^2 +
2\tr\left(\bfU_t^{\mode{n}\,\T}\bfY_{\mode{n}\,t}\bfY_{\mode{n}\,t}^\T
(\bfU-\bfU_t^\mode{n}) \right) \nonumber\\
&= \left\| \bfU_t^{\mode{n}\,\T}\bfY_{\mode{n}\,t} \right\|^2 +
2\tr\left(\bfA_t^{\mode{n}\,\T}(\bfU-\bfU_t^\mode{n}) \right).
\end{align}
Define the right-hand-side of \eqref{eq:lower} as $\hat{f}_t(\bfU)$, then indeed we have
\[
f(\{\bfU_t\}) = \left\| \bfU_t^{\mode{n}\,\T}\bfY_{\mode{n}\,t} \right\|^2 
= \hat{f}(\bfU^\mode{n}_t),
\]
which shows \eqref{eq:thm1a}, and
\[
\hat{f}(\bfU)\leq \left\| \bfU^\T\bfY_{\mode{n}\,t} \right\|^2, 
\]
for all $\bfU$, which shows \eqref{eq:thm1c} with $\bfU=\bfQ_t\bfV_t^\T$.

Maximizing $\hat{f}(\bfU)$ subject to orthonormal constraint is equivalent to
\begin{align*}
\maximize_{\bfU}~ \tr\bfA_t^{\mode{n}\,\T}\bfU \qquad \textup{subject to} ~ \bfU^\T\bfU = \bfI.
\end{align*}
The solution is given by the Procrustes rotation $\bfQ_t\bfV_t^\T$ \cite{schonemann1966generalized}, where $\bfQ_t$ and $\bfV_t$ are the left and right singular vectors of $\bfA_t^\mode{n}$. This shows \eqref{eq:thm1b}.

Finally, we recognize that both $\bfQ_t\bfV_t^\T$ and $\bfU^\mode{n}_{t+1}$ are orthonormal bases for the columns of $\bfA_t^\mode{n}$, thus they span the same subspace. As a result, 
\[
\|\bfY_{\mode{n}\,t}^\T\bfQ_t\bfV_t^\T\|^2 = \|\bfY_{\mode{n}\,t}^\T\bfU^\mode{n}_{t+1}\|^2 = f(\{\bfU_{t+1}\}).
\]
This shows \eqref{eq:thm1d}.
\end{proof}

\subsection{Primer on manifold optimization}

A manifold constrained optimization problem, as the name suggests, refers to a constrained optimization problem whose constraints are manifolds. Consider the general problem:
\begin{equation}\label{manifoldOptimization}
\maximize_{\bftheta\in \tM}~~f(\bftheta)
\end{equation}
where $\tM$ is a manifold constraint.
Our main objective function \eqref{prob:tucker} could be considered as a manifold optimization problem on the Cartesian product of $N$ Stiefel manifold constraints.
\begin{equation}\label{stiefelObjective}
\begin{aligned}
\maximize_{\{\bfU\}}~~ & \left\| \tX \times \left\{\bfU^\T\right\} \right\|^2 \\
\textup{subject to}~~ & \bfU^\mode{n}\in\St(I_n,K_n),~~ n=1,\ldots,N.
\end{aligned}
\end{equation}
The Stiefel manifold $\St(I,K)$ is a manifold in $\bbR^{I\times K}$ defined as the set of all matrices with orthonormal columns
\begin{equation}\label{eq:stiefel}
    \St(I,K)=\{ \bfU\in\bbR^{I\times K}:\bfU^\T\bfU=\bfI\}.
\end{equation}
In the remainder of this subsection, we will introduce some basic concepts in manifold optimization, and use the Stiefel manifold as the example to instantiate some key concepts such as tangent spaces and Riemannian gradients.

Informally, a manifold is a topological space that locally resembles Euclidean space near each point. For the purpose of this paper, we only focus on a special case called \emph{embedded submanifolds of linear spaces}, which is a subset of the Euclidean space $\bbR^d$ defined as $\tM=\{\bftheta~|~h(\bftheta)=0\}$ where $h$ is a set of smooth functions such that its Jacobian $Dh(\bftheta)$ has a fixed rank $k$ everywhere---the dimension of the manifold is therefore $d-k$. From this definition, we see that the Stiefel manifold $\St(I,K)$ is indeed a submanifold that is embedded in $\bbR^{I\times K}$, and its dimension is $IK-K(K+1)/2$.

A linear approximation of $\tM$ at the point $\bftheta$ is called a \emph{tangent space}, denoted as $\text{T}_{\bftheta}\tM$. If the manifold $\tM$ is defined as $\{\bftheta~|~h(\bftheta)=0\}$, then the tangent space is the null space of $Dh(\bftheta)$, the Jacobian matrix of $h$ at $\bftheta$. For Stiefel manifold, the tangent space is
\[
\text{T}_{\bfU}\St(I,K)=\{\bfV\in\bbR^{I\times K}: \bfU^\T\bfV+\bfV^\T\bfU=0\}.
\]
For an arbitrary point $\bfA\in\bbR^{I\times K}$, we will need to project it onto the tangent space $\text{T}_{\bfU}\St(I,K)$---which is not hard to derive as the tangent space is linear---denoted as
\begin{equation}\label{eq:proj}
\Proj_{\bfU}(\bfA)=\bfA - \bfU(\bfU^\T\bfA+\bfA^\T\bfU)/2.
\end{equation}
Notice that here we are projecting $\bfA$ onto the \emph{tangent space} of the manifold $\tM$, not the manifold $\tM$ itself. A commonly used operation that brings a point outside of the manifold onto \emph{the manifold} is called \emph{retraction}; however, it is limited to a point that lie on the tangent space of a point from the manifold. For a point $\bfU\in\St(I,K)$ and a direction $\bfV\in\text{T}_{\bfU}\St(I,K)$, one way to ``retract'' a point $\bfU+\alpha\bfV$ back to the Stiefel manifold is to apply the `thin' QR factorization to the matrix $\bfU+\alpha\bfV$ and return the orthonormal basis $\bfQ$. For the analysis of HOQRI, we are not required to understand retraction.

The formal definition of the tangent space is 
\[
\text{T}_{\bftheta}\tM = \{\tilde{\bftheta}'(0)~|~\tilde{\bftheta}:\bbR\rightarrow\tM \text{~is smooth at 0 and~}
\tilde{\bftheta}(0)=\bftheta\}.
\]
In other words, $\bfv$ is in $\text{T}_{\bftheta}\tM$ if and only if there exists a smooth curve $\tilde{\bftheta}(s)$ on $\tM$ passing through $\bftheta$ with velocity $\bfv$. This is important to define the directional derivative of a function $f$ on the manifold. For a point $\bftheta\in\tM$, the direction is limited to a vector $\bfv\in\text{T}_{\bftheta}\tM$, which is the velocity of a curve on the manifold at $\bftheta$, and the directional derivative is defined as
\begin{equation}\label{eq:dd}
f'(\bftheta;\bfv) = \lim_{s\downarrow0}\frac{f(\tilde{\bftheta}(s))-f(\bftheta)}{s},
\end{equation}
where $\tilde{\bftheta}:\bbR\rightarrow\tM$ satisfies $\tilde{\bftheta}(0)=\bftheta$ and $\tilde{\bftheta}'(0)=\bfv$. Although the choice of the curve $\tilde{\bftheta}(s)$ is not unique, it can be shown that they all lead to the same result. We are now ready to define the counterpart of gradient for functions on a manifold:
\begin{definition}[Riemannian gradient]\label{def:grad}
The Riemannian gradient of a smooth function $f$ at $\bftheta$, denoted as $\grad f(\bftheta)$, is a vector that satisfies
\[
\langle\grad f(\bftheta), \bfv\rangle = f'(\bftheta;\bfv),\quad
\forall \bftheta\in\tM, \bfv\in\text{T}_{\bftheta}(\tM).
\]
\end{definition}
Again, there is profound theory to establish its existence and that it is well-defined. A reassuring property is that this definition recovers the regular gradient for a function in the Euclidean space. In practice, the Riemannian gradient can be obtained as
\begin{equation}\label{eq:grad}
\grad f(\bftheta) = \Proj_{\bftheta}(\nabla f(\bftheta)),
\end{equation}
i.e., projecting the (unconstrained) gradient of $f$ onto the tangent space of $\tM$ at $\bftheta$. Specifically for the Tucker decomposition problem \eqref{prob:tucker} or in the manifold form \eqref{stiefelObjective}, if we denote the objective function as $f$, then
\[
\nabla_{\bfU^\mode{n}}f = 2\bfY_\mode{n}\bfG_\mode{n}^\T = 2\bfA^\mode{n},
\]
which is exactly the $\NewProd$ defined in \eqref{eq:ttmctc}. As a result, the corresponding Riemannian gradient is
\begin{align*}
\grad_{\bfU^\mode{n}}f &= \Proj_{\bfU^\mode{n}}(2\bfA^\mode{n}) \\
&= 2\bfA^\mode{n} - 2\bfU^\mode{n}
(\bfA^{\mode{n}\,\T}\bfU^\mode{n}+\bfU^{\mode{n}\,\T}\bfA^\mode{n})/2.
\end{align*}
Furthermore, we have that
\begin{equation}\label{eq:AU}
\bfA^{\mode{n}\,\T}\bfU^\mode{n}=\bfU^{\mode{n}\,\T}\bfA^\mode{n}=
\bfG_\mode{n}\bfG_\mode{n}^\T,
\end{equation}
as per the definition of $\bfA^\mode{n}$, so the final form of the Riemannian gradient is
\begin{equation}\label{eq:tucker_grad}
\grad_{\bfU^\mode{n}}f = \Proj_{\bfU^\mode{n}}(2\bfA^\mode{n}) = 
2\bfA^\mode{n} - 2\bfU^\mode{n}\bfG_\mode{n}\bfG_\mode{n}^\T.
\end{equation}

A necessary condition for optimality of the smooth manifold optimization problem \eqref{manifoldOptimization} is that $\grad f$ equals to zero. The intuition is that one should move a point on the manifold along curves, denoted as $\tilde{\bftheta}(s)$, which is tightly related to the Riemannian gradient; if a point is locally optimal, then all directional derivatives should be nonnegative, which then leads to the conclusion that the Riemannian gradient equals to zero. The notion of a stationary point follows naturally:
\begin{definition}[Stationary point]\label{def:stationary}
Given a smooth function $f$ on a Riemannian manifold $\tM$, we call $\bftheta\in\tM$ a stationary point of $f$ if $\grad f(\bftheta)=0$.
\end{definition}

It is of course equivalent to check whether the norm (squared) of the Riemannian gradient equals to zero. For the Tucker problem \eqref{prob:tucker}, if we denote the objective function as $f$, then we have that
\begin{align}\label{eq:grad2}
&\|\grad f(\{\bfU\})\|^2 = \sum_{n=1}^N \|2\bfA^\mode{n}-2\bfU^\mode{n}\bfG_\mode{n}\bfG_\mode{n}^\T\|^2 \nonumber \\
&= \sum_{n=1}^N 4\left(\|\bfA^\mode{n}\|^2 + \|\bfU^\mode{n}\bfG_\mode{n}\bfG_\mode{n}^\T\|^2 - 2\tr\bfA^{\mode{n}\,\T}\bfU^\mode{n}\bfG_\mode{n}\bfG_\mode{n}^\T \right) \nonumber \\
&= \sum_{n=1}^N 4\left(\|\bfA^\mode{n}\|^2 - \|\bfG_\mode{n}\bfG_\mode{n}^\T\|^2\right),
\end{align}
where the last equation is due to \eqref{eq:AU} and that $\bfU^\mode{n}$ is orthonormal.

\subsection{Convergence to a stationary point}\label{sec:conv}

We now show that HOQRI converges to a stationary point of \eqref{prob:tucker}, meaning that the Riemannian gradient of the objective \eqref{stiefelObjective}, with the product of $N$ Stiefel manifold, converges to zero. 

One of the major reasons why it is difficult to apply classical convergence results of BCD such as \cite{bertsekas2016nonlinear} in this case is that \cite{bertsekas2016nonlinear} requires each update to be the \emph{unique} optimizer of its corresponding subproblem, while the update rule in line 5 of Algorithm~\ref{alg:hoqri} is by nature not unique. However, what is essential is the subspace that the columns of $\bfU^\mode{n}$ represents, not the matrix per se. Therefore, we first define the ``distance'' between two $I\times K$ orthonormal matrices as
\begin{equation}\label{eq:dist}
d(\bfU,\bfV) = \min_{\bfQ^\T\bfQ=\bfQ\bfQ^\T=\bfI} \|\bfU-\bfV\bfQ\|^2.
\end{equation}
The minimization problem is the Procrustes projection problem, with solution given by the singular value decomposition of $\bfU^\T\bfV$ \cite{schonemann1966generalized}. This distance measures the difference between two subspaces, as it equal to zero if and only if $\bfU$ and $\bfV$ span the same subspace. One can also show that it is symmetric and it satisfies the triangular inequality, but we omit the proof here for conciseness. 

Similarly, we can define the distance between a set of orthonormal factor matrices $\{\bfU\}$ and a set $\cU=\cU^\mode{1}\times\cdots\times\cU^\mode{N}$ as
\[
d(\{\bfU\},\cU) = \sum_{n=1}^N 
\inf_{\bfV^\mode{n}\in\cU^\mode{n}}d(\bfU^\mode{n},\bfV^\mode{n}).
\]
The set of particular interest is the set of all stationary points of Problem \eqref{prob:main}, i.e., the set of all factor matrices such that their Riemannian gradients equal to zero:
\[
\cU^\ast = \{\{\bfU\}~|~ \grad f(\{\bfU\})=0\}.
\]

\begin{theorem}\label{thm:converge}
The iterates generated by HOQRI as in Alg.~\ref{alg:hoqri} satisfies
\begin{equation}\label{eq:converge}
\lim_{t\rightarrow\infty}~ d(\{\bfU_t\},\cU^\ast)=0.
\end{equation}
\end{theorem}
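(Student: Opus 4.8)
The plan is to combine a quantitative \emph{sufficient-increase} estimate with a compactness argument. Since each factor lives on a compact Stiefel manifold and the constraint set in \eqref{stiefelObjective} is a product of such manifolds, the continuous objective $f$ is bounded above on the feasible set; together with the monotonicity from Theorem~\ref{thm:obj}, the sequence $f(\{\bfU_t\})$ converges. Consequently the per-step gains $f(\{\bfU_t\})-f(\{\bfU_{t-1}\})$ are nonnegative, tend to zero, and are summable.

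The first real step is to sharpen the chain \eqref{eq:thm1} into a lower bound on each gain in terms of the Riemannian gradient of the \emph{updated} mode. Write $\bfA=\bfA^\mode{n}_{t-1}$, $\bfM=\bfG_\mode{n}\bfG_\mode{n}^\T=\bfU^{\mode{n}\,\T}_{t-1}\bfA$ (by \eqref{eq:AU}), and take the thin SVD $\bfA=\bfQ\bfSigma\bfV^\T$. The first two steps of \eqref{eq:thm1} evaluated at the Procrustes point $\bfQ\bfV^\T$ give
\begin{align*}
f(\{\bfU_t\})-f(\{\bfU_{t-1}\}) \;\ge\; 2\big(\|\bfA\|_* - \tr\bfM\big) \;=\; 2\sum_i\sigma_i\big(1-\bfR_{ii}\big),
\end{align*}
where $\bfR=\bfQ^\T\bfU^\mode{n}_{t-1}\bfV$ is a product of matrices with orthonormal columns, so it has operator norm at most one, $|\bfR_{ii}|\le1$, and $\bfT_{ii}=\sum_j\bfR_{ij}^2\ge\bfR_{ii}^2$ for $\bfT=\bfR\bfR^\T$. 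On the other hand, \eqref{eq:grad2} yields $\tfrac14\|\grad_{\bfU^\mode{n}}f(\{\bfU_{t-1}\})\|^2=\|\bfA\|^2-\|\bfM\|^2=\sum_i\sigma_i^2(1-\bfT_{ii})$. Using $1-\bfT_{ii}\le 1-\bfR_{ii}^2\le 2(1-\bfR_{ii})$ together with the uniform bound $\sigma_i\le\sigma_{\max}(\bfA)\le\|\tX\|^2$, these two identities combine to
\begin{align*}
\|\grad_{\bfU^\mode{n}}f(\{\bfU_{t-1}\})\|^2 \;\le\; 4\|\tX\|^2\big(f(\{\bfU_t\})-f(\{\bfU_{t-1}\})\big).
\end{align*}
Summing over $t$ shows that the partial Riemannian gradient with respect to the mode updated at step $t$, evaluated at the pre-update iterate, is square-summable, hence tends to zero.

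Next I would convert this per-mode decay into vanishing of the \emph{full} Riemannian gradient at a common point, and then invoke compactness. Aligning a subsequence with the cyclic order, I would extract a convergent subsequence of cycle-boundary iterates and, by a diagonal argument, further arrange that all $N$ within-cycle iterates converge. Because the active partial gradients vanish, continuity of $\grad f$ on the compact product manifold forces each mode to be stationary at its corresponding within-cycle limit; if these limits coincide, the common limit lies in $\cU^\ast$. Since every accumulation point of $\{\bfU_t\}$ then lies in the closed set $\cU^\ast$ while the iterates remain in a compact set, $d(\{\bfU_t\},\cU^\ast)\to0$: otherwise a subsequence bounded away from $\cU^\ast$ would, by compactness, accumulate at a point outside the closed set $\cU^\ast$, a contradiction.

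The main obstacle is exactly the coincidence of the within-cycle limits, i.e.\ \emph{asymptotic regularity}: that the per-step subspace movement $d(\bfU^\mode{n}_t,\bfU^\mode{n}_{t-1})$ vanishes. The sufficient-increase bound does not deliver this directly, because when $\bfA^\mode{n}_{t-1}=\bfY_\mode{n}\bfY_\mode{n}^\T\bfU^\mode{n}_{t-1}$ is rank-deficient or ill-conditioned, its small-singular-value directions can rotate the updated subspace without producing any gain, as they enter the bound above weighted by $\sigma_i\approx0$. The saving observation is that such free rotations occur only along directions annihilated by $\bfP=\bfY_\mode{n}\bfY_\mode{n}^\T$, which keep $\mathrm{col}(\bfU^\mode{n})$ nearly $\bfP$-invariant and hence keep the iterate nearly stationary; I would therefore argue at the level of distance to the \emph{set} $\cU^\ast$ rather than pointwise convergence of the factors, showing that these degenerate movements do not push the iterates away from $\cU^\ast$. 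Making this precise—so as to handle the degenerate modes without reintroducing the non-degeneracy assumption that HOOI requires—is the technical heart of the argument.
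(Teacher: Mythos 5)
Your sufficient-increase estimate is correct and is in fact a genuinely different, more elementary route to the paper's key inequality \eqref{eq:per-iter}: writing the per-step gain as $2\sum_i\sigma_i(1-\bfR_{ii})$ and the squared partial Riemannian gradient as $4\sum_i\sigma_i^2(1-\bfT_{ii})$ with $\bfT_{ii}\ge\bfR_{ii}^2$ lets you bypass the Cauchy interlacing theorem that the paper invokes through Lemma~\ref{lmm:interlacing}; the resulting constant $4\|\tX\|^2$ in place of $4f_{\sup}$ is immaterial. From there, the square-summability of the active partial gradients and your final compactness step (once every accumulation point lies in the closed set $\cU^\ast$, a subsequence bounded away from $\cU^\ast$ would accumulate outside it) match the paper.

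The gap is in the middle, and you name it yourself without closing it: you never establish the asymptotic regularity $d(\bfU^\mode{n}_{t+1},\bfU^\mode{n}_{t})\to0$ needed to make all $N$ within-cycle limits coincide, and you end by declaring that handling the degenerate (rank-deficient $\bfA^\mode{n}$) case ``is the technical heart of the argument'' while leaving it unproved. A plan that defers its hardest step is not a proof. The paper closes this step as follows: since $\grad_{\bfU^\mode{n}}f(\{\bfU_t\})=2\bfA^\mode{n}_t-2\bfU^\mode{n}_t\bfG_{\mode{n}\,t}\bfG_{\mode{n}\,t}^\T\to0$ and $\bfU^\mode{n}_{t+1}$ is by construction an orthonormal basis of $\bfA^\mode{n}_t$, the subspace distance \eqref{eq:dist} between consecutive updated factors tends to zero; hence a subsequence converging to $\{\bar{\bfU}\}$ in which block $1$ is updated infinitely often can be shifted by one step to a subsequence with the same limit in which block $2$ is updated infinitely often, and stationarity propagates through all $N$ blocks. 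To complete your argument you must either reproduce a step of this type or make rigorous your heuristic that ``free rotations'' in the near-null directions of $\bfY_\mode{n}\bfY_\mode{n}^\T$ do not increase the distance to $\cU^\ast$. It is to your credit that you spotted this subtlety --- the same ill-conditioning concern applies to the paper's own passage from \eqref{eq:grad_norm_0} to \eqref{eq:per-iter-0}, which is stated rather than fully argued --- but as written your proposal does not resolve it.
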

\begin{proof}
At iteration $t$ before the update, the objective value $f(\{\bfU_t\})$ is
\begin{align}\label{eq:ft}
f(\{\bfU_t\}) = 
\|\tX\times\{\bfU_t\}\|^2= \|\bfG_{\mode{n}\,t}\|^2.
\end{align}
The norm of its gradient with respect to $\bfU^\mode{n}$ is, according to \eqref{eq:grad2},
\begin{equation}\label{eq:grad_n}
\|\grad_{\bfU^\mode{n}}f(\{\bfU_t\})\|^2 = 4\|\bfA^\mode{n}_t\|^2 - 
4\|\bfG_{\mode{n}\,t}\bfG_{\mode{n}\,t}^\T\|^2.
\end{equation}
Denoting the singular values of $\bfA^\mode{n}_t$, in descending order, as $\sigma_1\geq\cdots\geq\sigma_{K_n}$ and the eigenvalues of $\bfG_{\mode{n}\,t}\bfG_{\mode{n}\,t}^\T$ as $\lambda_1\geq\cdots\geq\lambda_{K_n}$, we can rewrite \eqref{eq:grad_n} as
\begin{equation}\label{eq:grad_nsl}
\|\grad_{\bfU^\mode{n}}f(\{\bfU_t\})\|^2 = 4\sum_{k=1}^{K_n}(\sigma_k^2-\lambda_k^2).
\end{equation}

After the update, according to the derivation in the proof of Theorem~\ref{thm:obj}, $\bfU^\mode{n}_{t+1}$ yields the same objective as the orthonormal matrix obtained from the Procrustus rotation, which maximizes a linear lowerbound function, therefore we have
\begin{align}\label{eq:per-iter0}
f(\{\bfU_{t+1}\})
&= f(\{\bfU^\mode{1}_t,\ldots,\bfU^\mode{n-1}_t,\bfQ_t\bfV_t^\T, \bfU^\mode{n+1}_t,\ldots,\bfU^\mode{N}_t\}) \nonumber \\
&\geq f(\{\bfU_{t}\}) + 
2\tr\bfA_t^{\mode{n}\,\T}(\bfQ_t\bfV_t^\T-\bfU^\mode{n}_t) \nonumber \\
&= \|\bfG_{\mode{n}\,t}\|^2 + 2\tr\bfA^{\mode{n}\,\T}_t\bfQ_t\bfV_t^\T - 2\|\bfG_{\mode{n}\,t}\|^2 \nonumber \\ 
&= 2\|\bfA^\mode{n}_t\|_\ast - \|\bfG_{\mode{n}\,t}\|^2,
\end{align}
where the third line comes from $\tr\bfA^{\mode{n}\,\T}_t\bfU^\mode{n}_{t}
=\tr\bfG_{\mode{n}\,t}\bfG_{\mode{n}\,t}^\T=\|\bfG_{\mode{n}\,t}\|^2$, and the fourth line is because $\tr\bfA^{\mode{n}\,\T}_t\bfU$ is maximized over the Stiefel manifold when $\bfU=\bfQ_t\bfV_t^\T$ where $\bfQ_t$ and $\bfV_t$ collect the left and right singular vectors of $\bfA_t^\mode{n}$ as their columns, respectively, at which point $\tr\bfA^{\mode{n}\,\T}_t\bfQ\bfV^\T$ equals to the sum of the singular values of $\bfA^{\mode{n}}_t$, i.e., its nuclear norm $\|\bfA^\mode{n}_t\|_\ast$. Combining \eqref{eq:per-iter0} with \eqref{eq:ft}, we have
\begin{equation}\label{eq:A+G<f-f}
2\|\bfA^\mode{n}_t\|_\ast - 2\tr\bfG_{\mode{n}\,t}\bfG_{\mode{n}\,t}^\T \leq 
f(\{\bfU_{t+1}\})-f(\{\bfU_{t}\}).
\end{equation}
Recall our definition of the singular values of $\bfA^\mode{n}_t$ and the eigenvalues of $\bfG_{\mode{n}\,t}\bfG_{\mode{n}\,t}^\T$, we can also rewrite \eqref{eq:A+G<f-f} as
\begin{equation}\label{eq:s+l<f-f}
2\sum_{k=1}^{K_n}(\sigma_k-\lambda_k)\leq 
f(\{\bfU_{t+1}\})-f(\{\bfU_{t}\}).
\end{equation}
We will next try to upperbound \eqref{eq:grad_nsl} by \eqref{eq:s+l<f-f}.

As shown in Lemma~\ref{lmm:interlacing} in the Appendix using the Cauchy interlacing theorem~\cite[pp.118]{bellman1997introduction}, we have that
\begin{equation}\label{eq:interlacing}
\sigma_k\geq\lambda_k, k=1,\ldots,K_n.
\end{equation}
This shows that all $K_n$ terms in \eqref{eq:grad_nsl} are nonnegative, and we can safely bound it as 
\begin{align}\label{eq:bound-grad1}
\|\grad_{\bfU^\mode{n}}f(\{\bfU_t\})\|^2 
&= 4\sum_{k=1}^{K_n}(\sigma_k-\lambda_k)(\sigma_k+\lambda_k) \nonumber\\
&\leq 4\sum_{k=1}^{K_n}(\sigma_k-\lambda_k)\sum_{k=1}^{K_n}(\sigma_k+\lambda_k).
\end{align}
Equation \eqref{eq:interlacing} also necessarily means that $\|\bfA^\mode{n}_t\|_\ast\geq\|\bfG_{\mode{n}\,t}\|^2$; combining it with \eqref{eq:per-iter0} also shows that $f(\{\bfU_{t+1}\}) \geq\|\bfA^\mode{n}_t\|_\ast$. In other words, we have
\begin{align*}
f(\{\bfU_{t}\}) = \sum_{k=1}^{K_n}\lambda_k \quad\text{and}\quad
f(\{\bfU_{t+1}\}) \geq \sum_{k=1}^{K_n}\sigma_k.
\end{align*}
Furthermore, let $f_{\sup}$ denote the theoretical maximum of problem~\eqref{prob:main}, then obviously we have
\[
\sum_{k=1}^{K_n}\lambda_k \leq \sum_{k=1}^{K_n}\sigma_k \leq f_{\sup}.
\]
Continuing \eqref{eq:bound-grad1} gives us
\begin{equation}\label{eq:bound-grad}
\|\grad_{\bfU^\mode{n}}f(\{\bfU_t\})\|^2 \leq
8f_{\sup}\sum_{k=1}^{K_n}(\sigma_k-\lambda_k).
\end{equation}
Combining \eqref{eq:bound-grad} with \eqref{eq:s+l<f-f} shows
\begin{equation}\label{eq:per-iter}
\|\grad_{\bfU^\mode{n}}f(\{\bfU_t\})\|^2 \leq 
4f_{\sup}\left( f(\{\bfU_{t+1}\})-f(\{\bfU_{t}\}) \right),
\end{equation}
where $n=(t\mod N) + 1$.

Inequality \eqref{eq:per-iter} is an important inequality as it establishes the relationship between the objective improvement and the Riemannian gradient of the block of variables that is being updated. Summing up \eqref{eq:per-iter} over $t=0,\ldots,T$ yields
\begin{align*}
\sum_{t=0}^T\|\grad_{\bfU^\mode{n}}f(\{\bfU_{t}\})\|^2
&\leq \sum_{t=0}^T4f_{\sup}(f(\{\bfU_{t+1}\})\!-\!f(\{\bfU_{t}\}))\\
&= 4f_{\sup}(f(\{\bfU_{T+1}\}) - f(\{\bfU_{0}\})) \\
&\leq 4f_{\sup}^2.
\end{align*}
Letting $T$ go to infinity, we get an infinite sum of nonnegative numbers on the left-hand-side, while the right-hand-side is a finite constant that does not change with $T$. This shows that the partial Riemannian gradients converges to zero, i.e., 
\begin{equation}\label{eq:partial_grad_0}
\grad_{\bfU^\mode{n}}f(\{\bfU_{t}\}) \rightarrow0,
\end{equation}
again with $n=(t\mod N)+1$.

Consider a limit point $\{\bar{\bfU}\}$ and a subsequence $\{\bfU_{t_j}\}_j$ converging to $\{\bar{\bfU}\}$. Here we are using the distance defined in \eqref{eq:dist}, meaning that for all $\epsilon>0$, there exists an index $T$ such that
\[
d(\bfU^\mode{n}_{t_j},\bar{\bfU}^\mode{n})<\epsilon, \quad
\forall t_j>T \text{ and }
n=1,\ldots,N.
\]
Since there is a finite number of $N$ blocks, there exists a block that is updated infinitely often in the subsequence $\{\bfU_{t_j}\}_j$. Assume without loss of generality that the first block is updated infinitely often, then we directly have that
\begin{equation}\label{eq:grad1_converge}
\grad_{\bfU^\mode{1}}f(\{\bar{\bfU}\})=0.
\end{equation}
What we aim to show next is that the partial gradients with respect to other blocks at $\{\bar{\bfU}\}$ are also zero, even if they are not updated infinitely often in the subsequence $\{\bfU_{t_j}\}_j$.

Due to \eqref{eq:partial_grad_0}, and recall the definition of $\grad f$ \eqref{eq:grad}, we rewrite it as
\begin{equation}\label{eq:grad_norm_0}
\|2\bfA^\mode{n}_t-2\bfU^\mode{n}_t\bfG_{\mode{n}\,t}\bfG_{\mode{n}\,t}^\T\|^2
\rightarrow 0.
\end{equation}
This implies that as $t\rightarrow\infty$, $\bfA^\mode{n}_t$ and $\bfU^\mode{n}_t$ span the same subspace in the limit. Recall that $\bfU^\mode{n}_{t+1}$ is defined as an orthonormal basis of $\bfA^\mode{n}_t$, therefore
\begin{equation}\label{eq:per-iter-0}
d(\bfU^\mode{n}_{t+1},\bfU^\mode{n}_{t})\rightarrow0.
\end{equation}
This, together with the subsequence $\{\bfU_{t_j}\}_j$ that converges to $\{\bar{\bfU}\}$, implies that the subsequence $\{\bfU_{t_j+1}\}_j$ also converges to $\{\bar{\bfU}\}$. Since in subsequence $\{\bfU_{t_j}\}_j$ the first block is updated infinitely often, in subsequence $\{\bfU_{t_j+1}\}_j$ the second block is updated infinitely often, therefore
\[
\grad_{\bfU^\mode{2}}f(\{\bar{\bfU}\})=0.
\]
Repeat the steps for the other factors, we conclude that
\[
\grad f(\{\bar{\bfU}\})=0.
\]
In other words, every limit point is a stationary point.

Finally, we show that the whole sequence $\{\bfU_t\}$ satisfies \eqref{eq:converge} by contradiction. Suppose on the contrary that there exists a subsequence such that $d(\{\bfU_{t_j}\},\cU^\ast)>\gamma$ for some $\gamma>0$. Since the Stiefel manifold is a compact set, it has a limit point $\{\bar{\bfU}\}$, which implies $d(\{\bar{\bfU}\},\cU^\ast)>\gamma$. This contradicts the fact that $\{\bar{\bfU}\}\in\cU^\ast$ as per our previous argument. This completes the proof that the iterates of HOQRI, as shown in Alg.~\ref{alg:hoqri}, converges to the set of limit points of \eqref{prob:main}.
\end{proof}

\noindent
\textbf{Remark.}
For those who are familiar with the convergence of block coordinate descent (BCD) \cite{bertsekas2016nonlinear} and its variants such as block successive upperbound minimization (BSUM) \cite{razaviyayn2013unified}, Theorem~\ref{thm:converge} may come as somewhat unexpected as it does not rely on the assumption that each update is the \emph{unique} optimizer of a certain subproblem. In fact, by nature the update cannot be unique since any orthonormal basis of $\bfA^\mode{n}$ can be used as the update for $\bfU^\mode{n}$. What makes it different is that thanks to the specific problem structure, the norm of the (partial) Riemannian gradient at every iteration can be upperbounded by the per-iteration objective improvement. This is not guaranteed by a generic BCD or BSUM algorithm. In fact, the proof is more similar to that of the block successive convex approximation (BSCA) algorithm as in Theorem 4 of \cite{razaviyayn2013unified}. The statement of Theorem~\ref{thm:converge} also implies that the whole sequence $\{\bfU_t\}$ does not necessarily converge---the claim is that limit points exist and all of them are stationary points. 

One may wonder if this new manifold optimization perspective on the Tucker decomposition problem may improve the convergence analysis of the well-known HOOI. In our attempt, all the steps in the proof of Theorem~\ref{thm:converge} can be repeated until \eqref{eq:grad1_converge}, except the first line of \eqref{eq:per-iter0} becomes $\geq$, which does not affect the proof: this is because if $\bfU^\mode{n}_{t+1}$ is obtained from HOOI as in line 5 of Algorithm~\ref{alg:hooi}, by definition it yields larger objective value than that of $\bfQ_t\bfV_t^\T$. However, we cannot guarantee \eqref{eq:per-iter-0} using \eqref{eq:grad_norm_0} if $\bfU_{t+1}^\mode{n}$ is obtained from HOOI: \eqref{eq:grad_norm_0} implies that in the limit, $\bfU_t^\mode{n}$ spans the same subspace of \emph{some} $K_n$ eigenvectors of $\bfY_{\mode{n}\,t}\bfY_{\mode{n}\,t}^\T$, but $\bfU_{t+1}^\mode{n}$ must be the eigenvectors of the $K_n$ largest eigenvalues, therefore it is not obvious that $d(\bfU^\mode{n}_{t+1},\bfU^\mode{n}_{t})\rightarrow0$ for HOOI. The best known convergence for HOOI is still by Xu et al. \cite{xu2018convergence}, which requires a so-called ``non-degeneracy'' assumption to guarantee convergence to a stationary point as we explained in Section \ref{sec:tucker}. This assumption is not required by HOQRI to guarantee convergence.

\section{Numerical Experiments}
We now demonstrate the performance of the proposed HOQRI algorithm for Tucker decomposition. The HOQRI algorithm has been implemented in both Matlab and C++. 

In both implementations, we employ a sparse tensor format for data storage, ensuring that the space utilization is directly proportional to the number of non-zero entries rather than the tensor's dimensions. In the Matlab implementation, we utilize the same sparse tensor format employed by Tensor Toolbox \cite{tensor_toolbox} and Tensorlab \cite{tensorlab3.0}. Using this tensor format in our implementation of HOQRI allows us to fairly assess its performance compared to the Tensor Toolbox and Tensorlab methods for low multi-linear rank tensor approximation. In the C++ implementation, we adopt the data structure utilized by S-HOT \cite{oh2017s}.

Reproducibility: The source code of HOQRI for the MATLAB results displayed in Section~\ref{sec:matlab} are available at \url{https://github.com/amitbhat31/hoqri}. The source code for the C++ implementation of HOQRI and the datasets examined in Section~\ref{sec:cpp} are available at \url{https://github.com/yuchen19253/HOQRI}.

\subsection{Matlab implementation}\label{sec:matlab}

We perform a comparative analysis of HOQRI implemented in Matlab with the algorithms available in the Tensor Toolbox \cite{tensor_toolbox} and Tensorlab \cite{tensorlab3.0} packages. In Tensor Toolbox, we consider the \texttt{tucker\_als} methods, which implements the HOOI algorithm and is the only method that utilizes Tensor Toolbox's sparse tensor structure. Meanwhile, Tensorlab provides a variety of algorithms for Tucker decomposition. In addition to the HOOI implementation provided in \texttt{lmlra\_hooi}, Tensorlab also features \texttt{lmlra\_minf} and \texttt{lmlra\_nls}, which employ nonlinear unconstrained optimization and nonlinear least squares, respectively. 

In our implementation of HOQRI, we utilize the sparse tensor format provided by the Tensor Toolbox and Tensorlab packages. However, we refrain from using any optimized methods from the libraries for computing sparse mode products, such as the \texttt{ttv} function in Tensor Toolbox and the \texttt{mtkronprod} function in Tensorlab. This demonstrates that HOQRI yields significant convergence and speed improvements without the need for any extra scaffolding other than the sparse tensor structure as opposed to the baseline methods, which use the aforementioned optimized mode products mentioned above.

Since the methods \texttt{lmlra\_minf} and \texttt{lmlra\_nls} do not impose orthonormal constraints on the set of matrices ${\bfU}$, it is not appropriate to evaluate the objective \eqref{prob:main}. Consequently, we consider the standard reconstruction objective~\eqref{prob:tucker}. We evaluate the scalability of HOQRI by considering how the per-iteration complexity changes as tensor dimensionality and the rank of the core tensor increases.

\begin{figure}[t]
\centering
\begin{subfigure}[t]{0.49\linewidth}  
    \centering 
    \includegraphics[width=\linewidth]{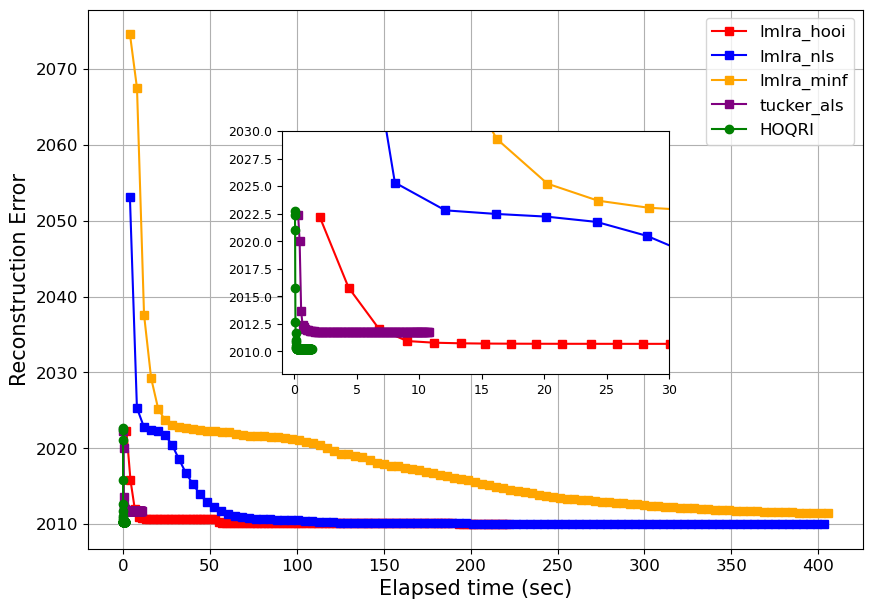}
    \caption[]%
    {nnz=$10I$.}    
\end{subfigure}
\hfill
\begin{subfigure}[t]{0.49\linewidth}
    \centering
    \includegraphics[width=\linewidth]{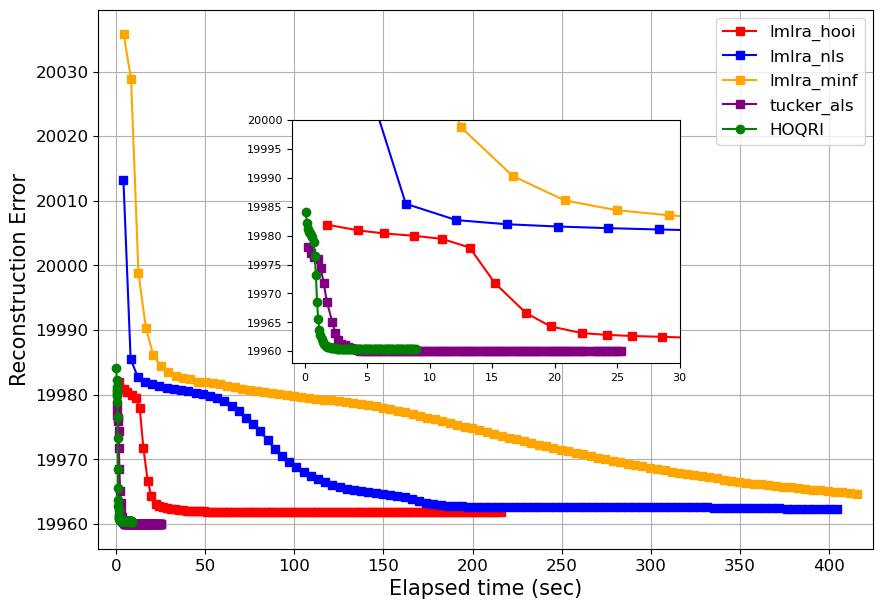}
    \caption[]%
    {nnz=$100I$.}    
\end{subfigure}
\caption{Convergence of HOQRI on 3-dimensional tensors versus the methods lmlra\_hooi, lmlra\_minf, lmlra\_nls, and tucker\_als with $I=600$.} 
\label{fig:3mode_error}
\end{figure}

\subsubsection{Convergence over time}
To test the convergence of the algorithms, we synthetically generate sparse tensors of size $\underbrace{I \times I \times \cdots \times I}_{n}$ for $n = 3, 4, 5$ where $I$ denotes the size of the tensor dimension and $n$ denotes the number of tensor dimensions. The target multilinear rank in this context is set to $\underbrace{10 \times 10 \times \cdots \times 10}_{n}$. We consider tensors with number of nonzero entries $10I$ and $100I$. For tensors of $n=4$ and $n=5$ dimensions, we only consider HQORI compared to \texttt{lmlra\_hooi} and \texttt{tucker\_als}, as the two Tensorlab methods without orthonormal constraints on the factor matrices fail to scale for more than 3 dimensions and take exceptionally long.

To take into account all factors that impact efficiency, we display time in seconds on the horizontal axis and approximation error \eqref{prob:tucker} on the vertical axis. All methods are run for a maximum of 100 iterations or until the change in the approximation error is less than $10^{-12}$. HOQRI achieves superior performance in both approximation error and computational speed across all sizes of tensors, as demonstrated in Figures 
\ref{fig:3mode_error}, \ref{fig:4mode_error}, and \ref{fig:5mode_error}.

\begin{figure}[t]
\centering
\begin{subfigure}[t]{0.49\linewidth}  
    \centering 
    \includegraphics[width=\linewidth]{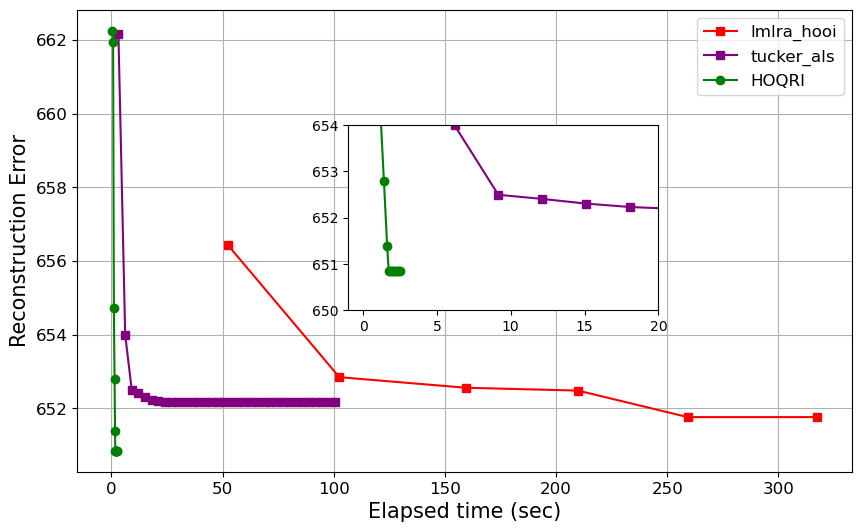}
    \caption[]%
    {nnz=$10I$.}    
\end{subfigure}
\hfill
\begin{subfigure}[t]{0.49\linewidth}
    \centering
    \includegraphics[width=\linewidth]{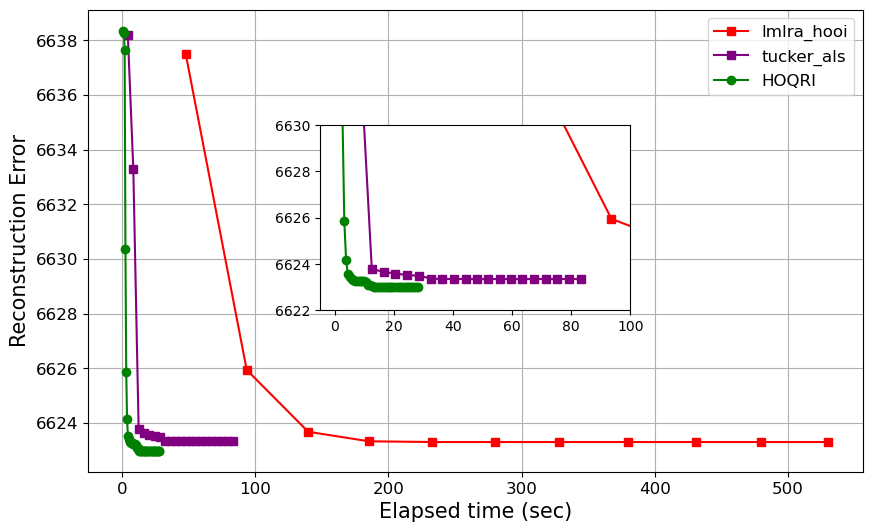}
    \caption[]%
    {nnz=$100I$.}    
\end{subfigure}
\caption{Convergence of HOQRI on 4-dimensional tensors versus the methods lmlra\_hooi and tucker\_als with $I=200$.} 
\label{fig:4mode_error}
\end{figure}

\begin{figure}[t]
\centering
\begin{subfigure}[t]{0.49\linewidth}  
    \centering 
    \includegraphics[width=\linewidth]{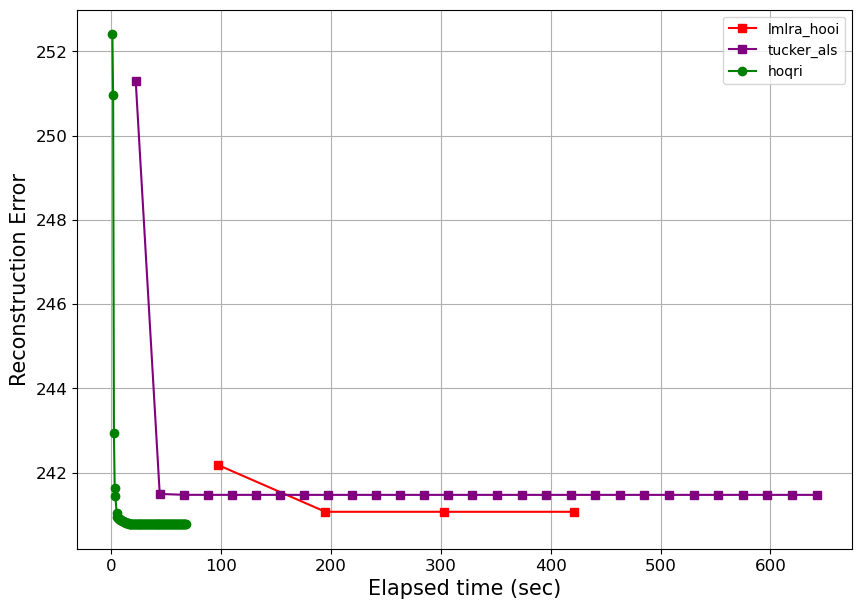}
    \caption[]%
    {nnz=$10I$.}    
\end{subfigure}
\hfill
\begin{subfigure}[t]{0.49\linewidth}
    \centering
    \includegraphics[width=\linewidth]{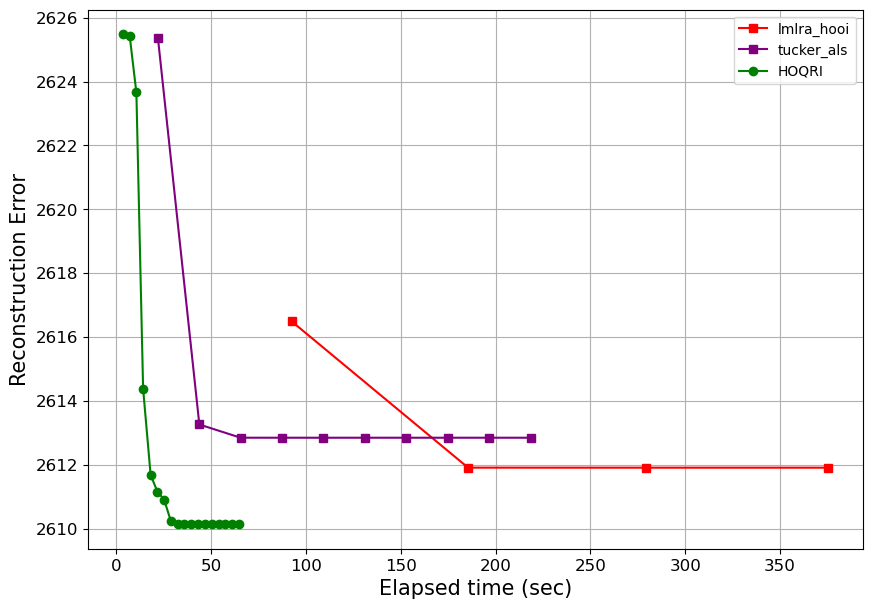}
    \caption[]%
    {nnz=$100I$.}    
\end{subfigure}
\caption{Convergence of HOQRI on 5-dimensional tensors versus the methods lmlra\_hooi and tucker\_als with $I=80$.} 
\label{fig:5mode_error}
\end{figure}

\subsubsection{Per-iteration complexity}
We now take a more detailed look at the efficiency of HOQRI compared to Tensor Toolbox and Tensorlab algorithms with respect to per-iteration complexity. First, we consider the averaged elapsed time per iteration as we increase the tensor dimension $I$ from $10^2$ to $3 \times 10^3$ with $\nnz = 10I$ and $\nnz=100I$ for 3-way tensors $I \times I \times I$. The core rank is fixed as $10 \times 10 \times 10$. All experiments were run for 100 iterations. As shown in Figure~\ref{fig:tensordim},
our algorithm is more scalable than all the baselines. Tensorlab methods eventually run out of memory, with \texttt{lmlra\_nls} and \texttt{lmlra\_minf} failing at $I = 700$ and \texttt{lmlra\_hooi} failing at $I = 1000$. This is due to the Tensorlab methods not supporting sparse tensors as inputs. In contrast, \texttt{tucker\_als} works with sparse tensors, and demonstrates comparable per-iteration complexity for $I \leq 1000$. However, as I increases, \texttt{tucker\_als} takes longer for each iteration. Meanwhile, the average elapsed time per iteration of HOQRI does not change much as the tensor size increases.

Next, we fix the tensor dimension to be $I = 600$ and evaluate the scalability of HOQRI as the core rank increases from 4 to 20. All experiments were run for 100 iterations. Figure~\ref{fig:corerank}
demonstrates that HOQRI consistently runs faster and scales at a far better rate compared to the Tensorlab methods. \texttt{tucker\_als} is once again the only algorithm that demonstrates similar scalability to HOQRI as the core rank increases, but HQORI consistently still has a lower per-iteration speed.

\begin{figure}[t]
\centering
\begin{subfigure}[t]{0.49\linewidth}  
    \centering 
    \includegraphics[width=\linewidth]{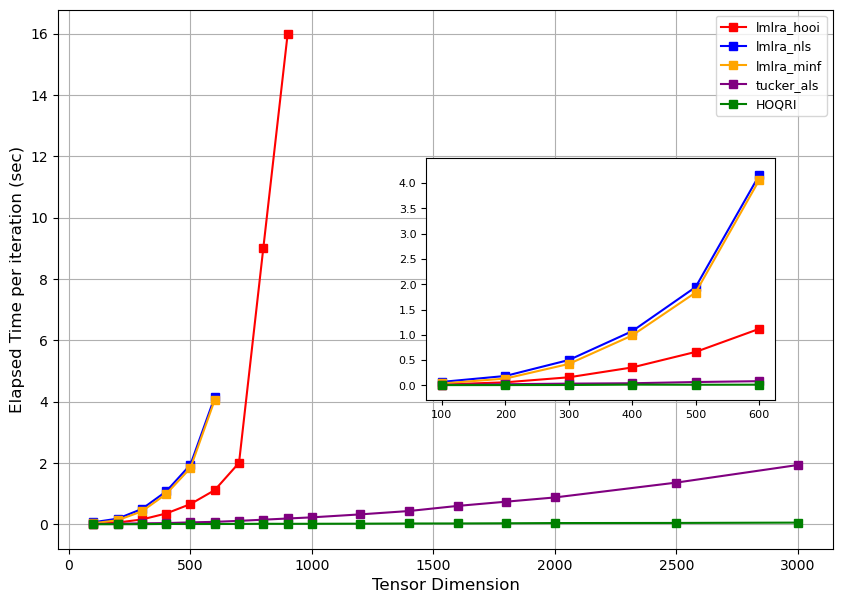}
    \caption[]%
    {nnz=$10I$.}    
\end{subfigure}
\hfill
\begin{subfigure}[t]{0.49\linewidth}
    \centering
    \includegraphics[width=\linewidth]{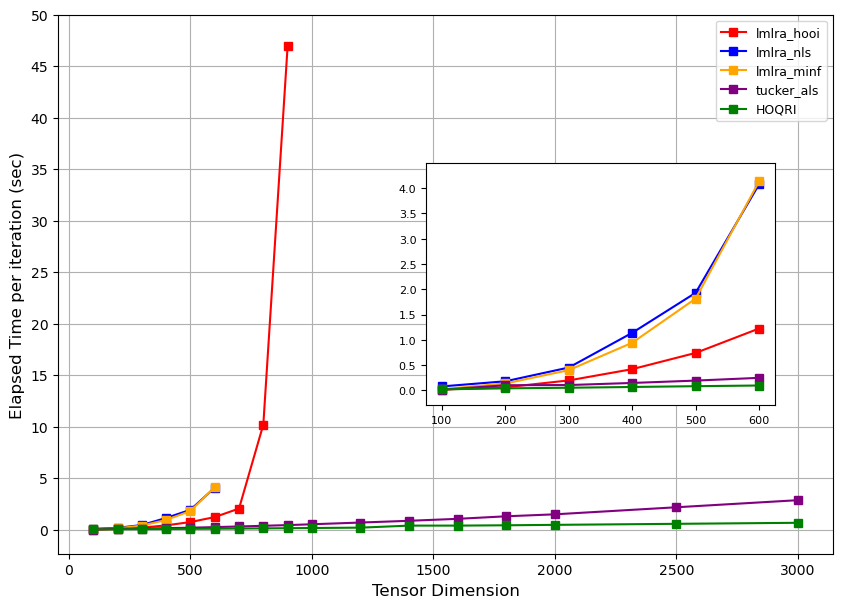}
    \caption[]%
    {nnz=$100I$.}    
\end{subfigure}
\caption{Elapsed time per iteration as tensor dimension increases for 3-dimensional tensors with core rank fixed at $10 \times 10 \times 10$.} 
\label{fig:tensordim}
\end{figure}

\begin{figure}[t]
\centering
\begin{subfigure}[t]{0.49\linewidth}  
    \centering 
    \includegraphics[width=\linewidth]{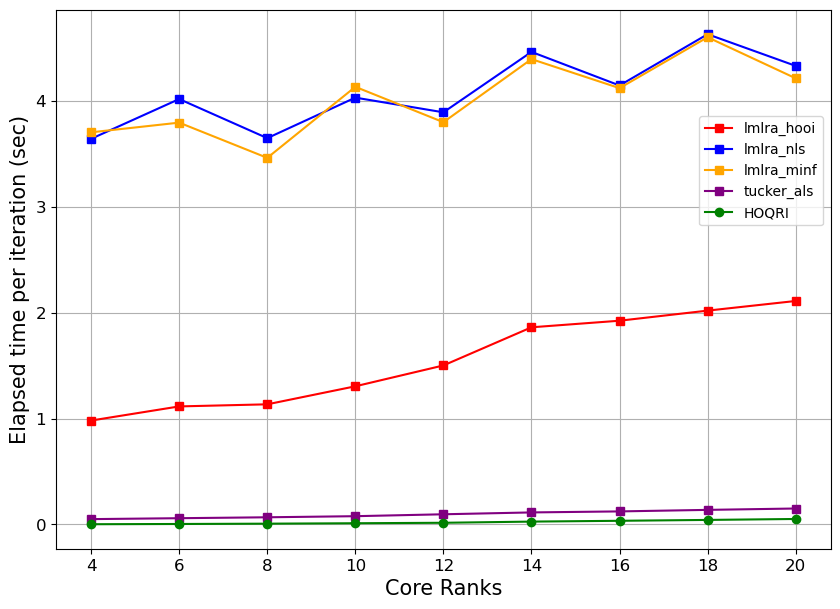}
    \caption[]%
    {nnz=$10I$.}    
\end{subfigure}
\hfill
\begin{subfigure}[t]{0.49\linewidth}
    \centering
    \includegraphics[width=\linewidth]{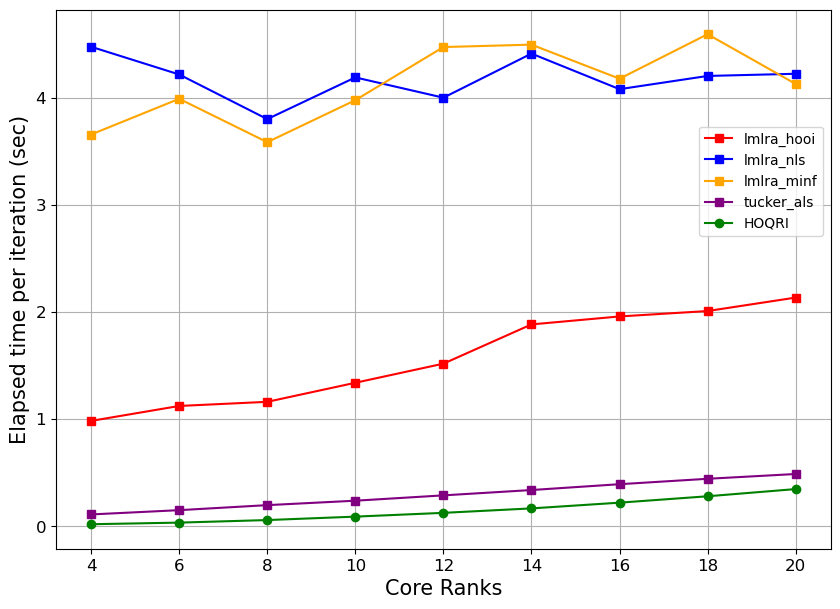}
    \caption[]%
    {nnz=$100I$.}    
\end{subfigure}
\caption{Elapsed time per iteration as core rank increases for 3-dimensional tensors with tensor dimension fixed at $600 \times 600 \times 600$.} 
\label{fig:corerank}
\end{figure}

\subsection{C++ implementation}\label{sec:cpp}
In this subsection, we conduct several experiments to compare our model with Scalable High-Order Tucker Decomposition (S-HOT) by Jinoh et al. \cite{oh2017s}, which is developed in the C++ environment. In this case, we evaluate the performances of HOQRI vis-a-vis S-HOT on the following real-world data sets, summarized in Table \ref{datasets}:
\begin{itemize}
\item The Delicious and Last data sets are time-evolving data\footnote{https://grouplens.org/datasets/hetrec-2011/}.
The Delicious data set \cite{Delicious} contains social networking, bookmarking, and tagging information from a set of 2K users from the Delicious social bookmarking system \footnote{http://www.delicious.com}. The Last data set \cite{Last} contains social networking, tagging, and music artist listening information from a set of 2K users from an online music system \footnote{http://www.lastfm.com}.
\item The Facebook data set is the target network for a small subset of posts on other users' walls on Facebook \cite{konect:2014:facebook-wosn-wall}\cite{viswanath09}\cite{konect}. The nodes of the network are Facebook users. The first and second columns in the dataset represent the user ID. The third dimension is a timestamp. The value of each entry represents the weight of the edge in the network.
\item MovieLens and 10M-MovieLens are movie rating data\footnote{https://grouplens.org/datasets/movielens/} consisting of (user, movie, time, rating) \cite{MovieLen}. MovieLens is a smaller subset with a higher density, while 10M-MovieLens is bigger.
\end{itemize}

\begin{table}[H]
\centering
\caption{Summary of real-world tensor datasets used for comparison.}\label{datasets}
\begin{tabular}{cccccc}
   \toprule
   Dataset & Dimension & $\nnz$\\
   \midrule
   Last & (2100, 18744, 12647) & 186479 \\
   Delicious & (108035, 107253, 52955) & 437593 \\
   Facebook & (46952, 46951, 1592) & 738079 \\
   MovieLens & (610, 49961, 8215) & 84159\\
   10M-MovieLens & (162541, 49994, 9083) & 20503478\\
   \bottomrule
\end{tabular}
\end{table}

\begin{figure}[H]
\centering
\begin{subfigure}[t]{0.49\linewidth}  
    \centering 
    \includegraphics[width=\linewidth]{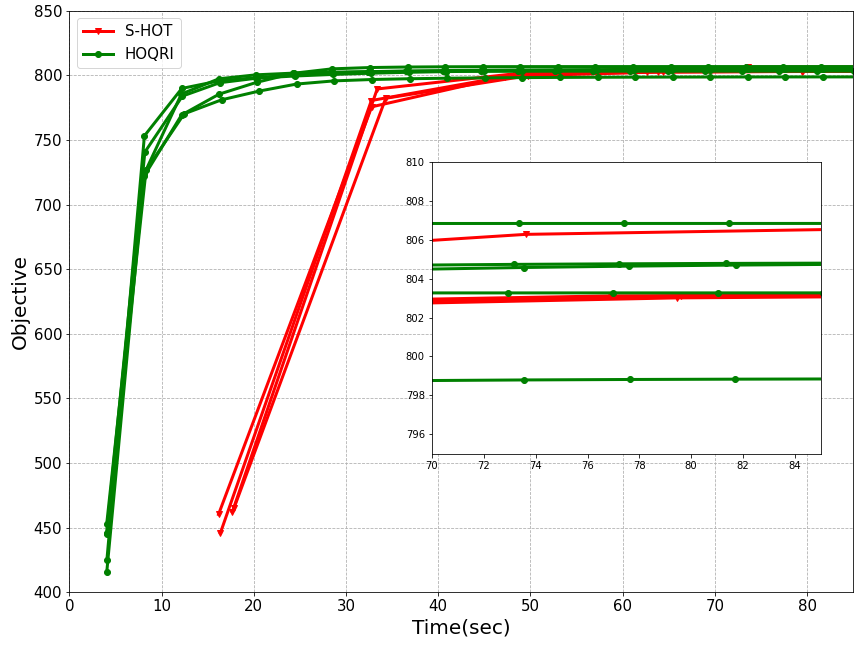}
    \caption{Rank $10\times10\times10$.} 
    \label{Last}
\end{subfigure}
\hfill
\begin{subfigure}[t]{0.49\linewidth}
    \centering
    \includegraphics[width=\linewidth]{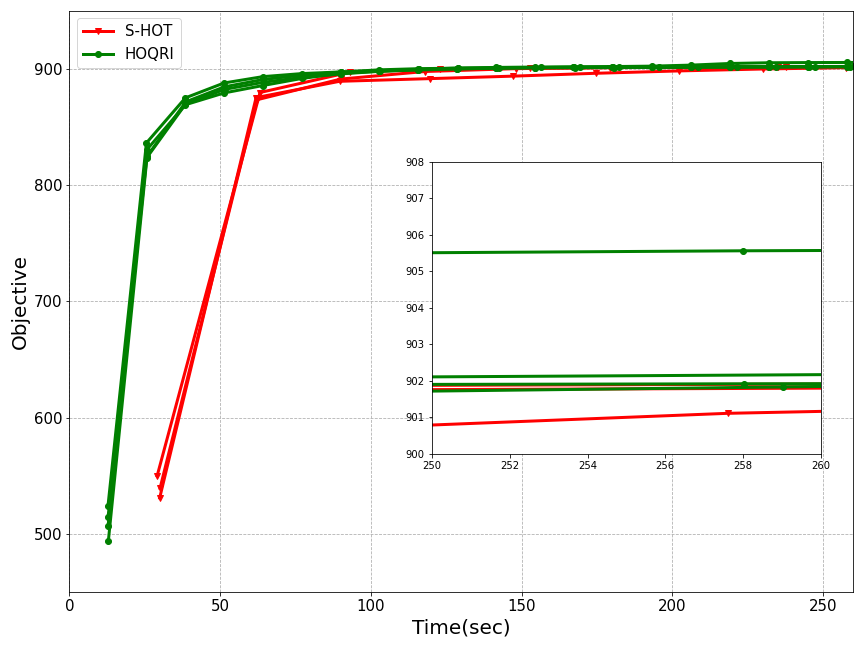}
    \caption{Rank $15\times15\times15$.} 
    \label{Last151515}
\end{subfigure}
\caption{Convergence on the Last data set.} 
\end{figure}

\begin{figure}[H]
\begin{subfigure}[t]{0.49\linewidth}  
    \centering 
    \includegraphics[width=\linewidth]{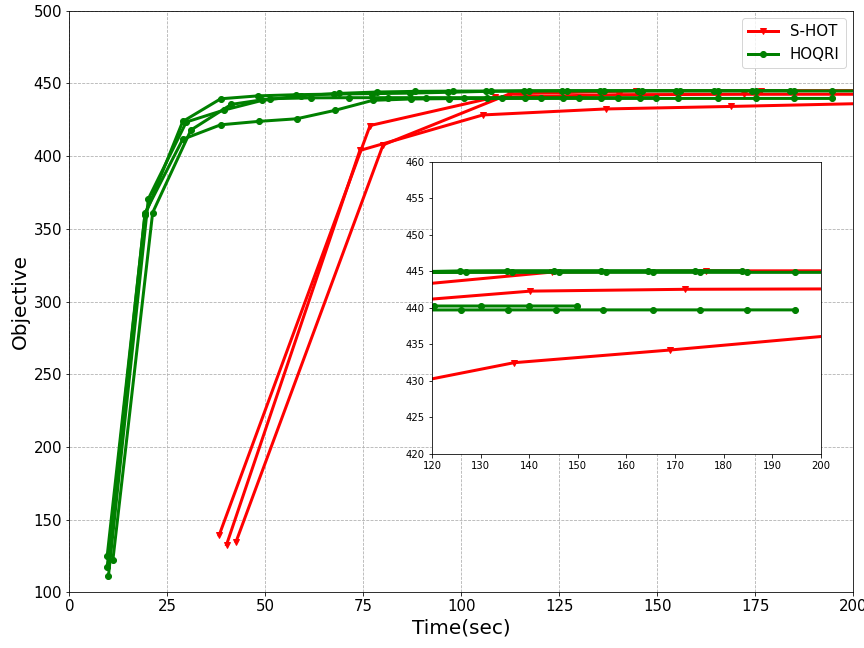}
    \caption{Rank $10\times10\times10$.} 
    \label{Delicious}
\end{subfigure}
\hfill
\begin{subfigure}[t]{0.49\linewidth}
    \centering
    \includegraphics[width=\linewidth]{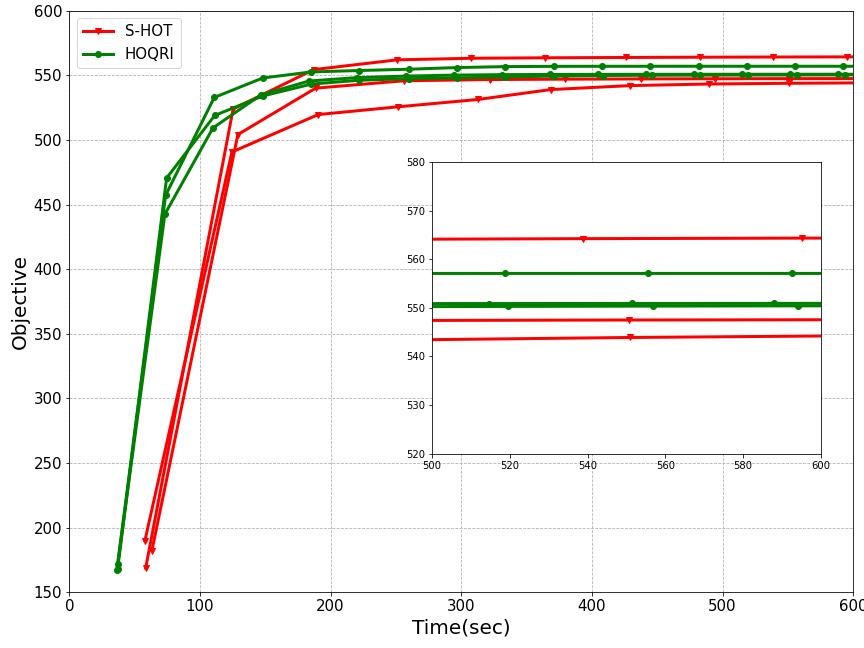}
    \caption{Rank $20\times20\times10$.} 
    \label{Delicious202010}
\end{subfigure}
\caption{Convergence on the Delicious data set.} 
\end{figure}

\begin{figure}[H]

\begin{subfigure}[t]{0.49\linewidth}  
    \centering 
    \includegraphics[width=\linewidth]{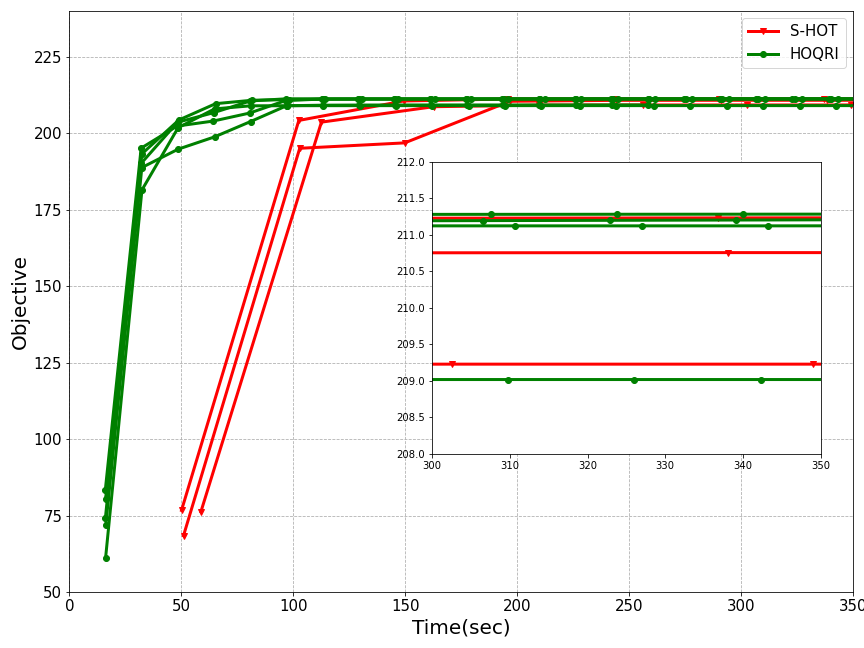}
    \caption{Rank $10\times10\times10$.} 
    \label{Facebook}
\end{subfigure}
\hfill
\begin{subfigure}[t]{0.49\linewidth}
    \centering
    \includegraphics[width=\linewidth]{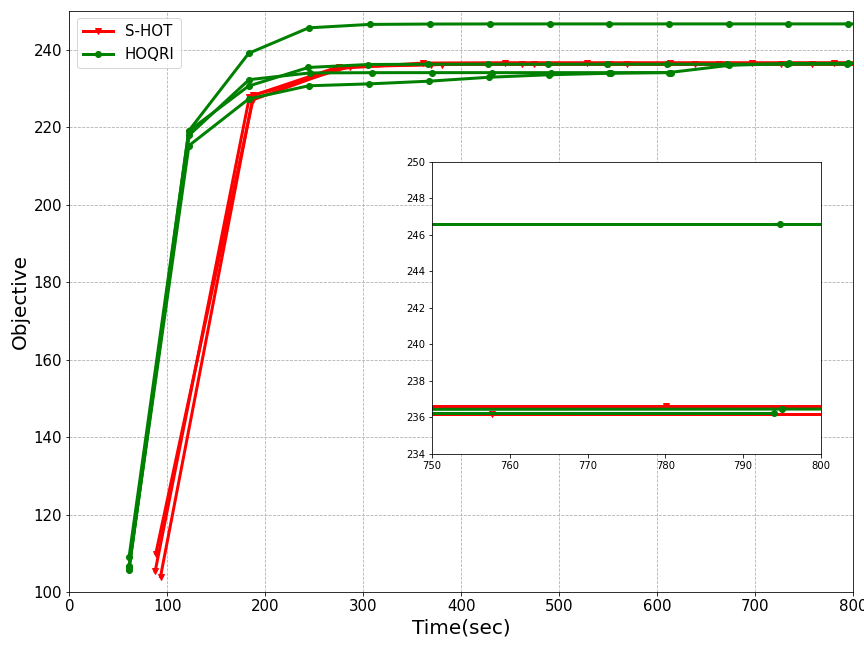}
    \caption{Rank $20\times20\times10$.} 
    \label{Facebook202010}
\end{subfigure}
\caption{Convergence on the Facebook data set.} 
\end{figure}

\noindent The convergence of HOQRI versus S-HOT on all these data sets are shown in Figures \ref{Last}, Figure \ref{Delicious}, Figure \ref{Facebook}, Figure \ref{MovieLen} and Figure \ref{10M_MovieLen}. It is evident that HOQRI converges significantly faster than S-HOT in all cases.

To make the task more challenging, we try relatively higher multilinear ranks to test their performance, as described in Figures \ref{Last151515}, Figure \ref{Delicious202010}, Figure \ref{Facebook202010}, Figure \ref{MovieLen152515} and Figure \ref{10M_MovieLen202010} on the right of the figures. The compact subfigure enclosed within the larger illustration delves into the convergence aspect of both algorithms. These subfigures clearly indicate that HOQRI exhibits the capability to converge towards a superior solution for the maximization of the objective \eqref{prob:main}. We tested several times to compare HOQRI and S-HOT to show that the result was not from some lucky initialization. For the larger dataset 10M-MovieLens, it is only run once as S-HOT takes 2500 seconds to 3000 seconds per iteration, which would require two days to finish 50 iterations.

\begin{figure}[H]
\begin{subfigure}[t]{0.49\linewidth}  
    \centering 
    \includegraphics[width=\linewidth]{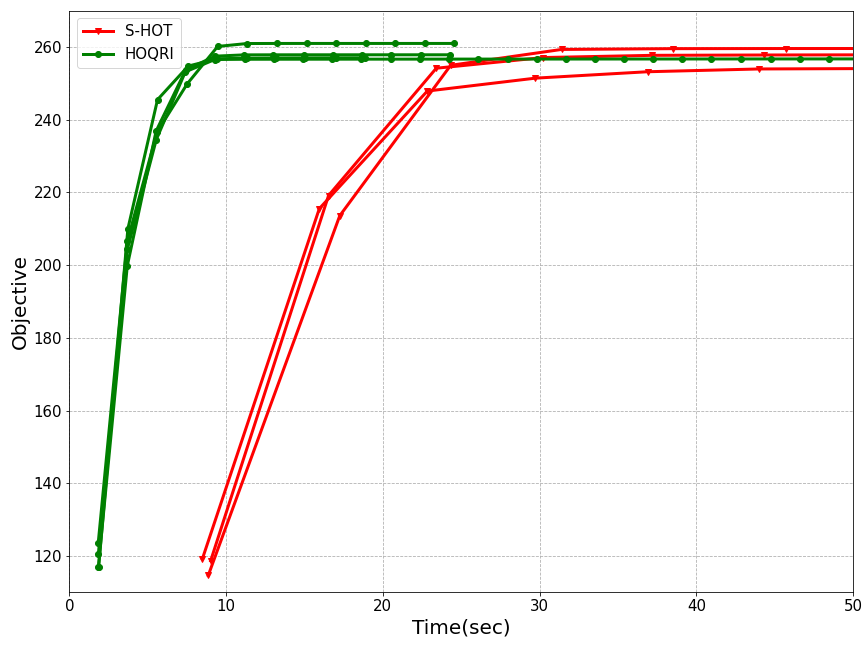}
    \caption{Rank $10\times10\times10$.} 
    \label{MovieLen}
\end{subfigure}
\hfill
\begin{subfigure}[t]{0.49\linewidth}
    \centering
    \includegraphics[width=\linewidth]{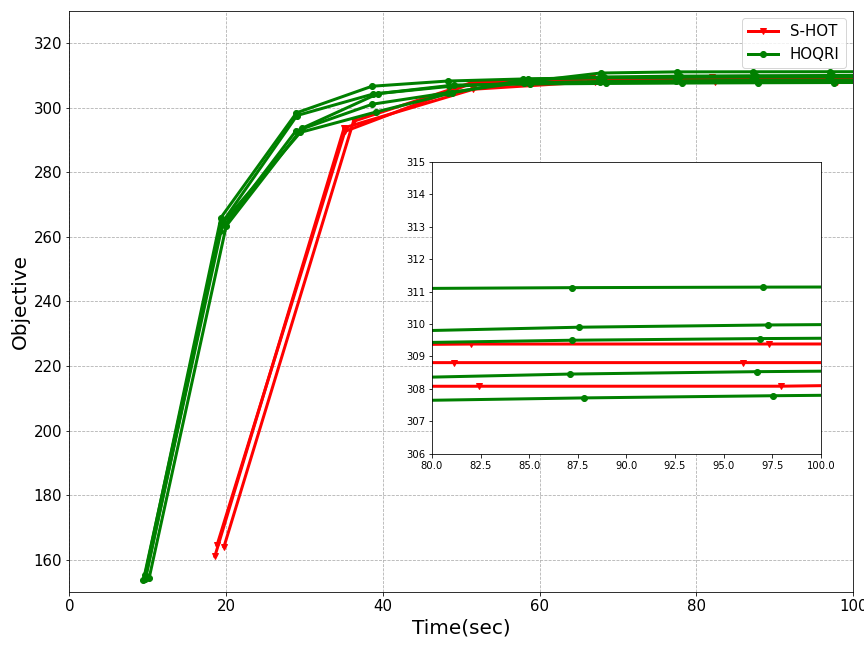}
    \caption{Rank $15\times25\times15$.} 
    \label{MovieLen152515}
\end{subfigure}
\caption{Convergence on the MovieLen data set.} 

\end{figure}

\begin{figure}[H]

\begin{subfigure}[t]{0.49\linewidth}  
    \centering 
    \includegraphics[width=\linewidth]{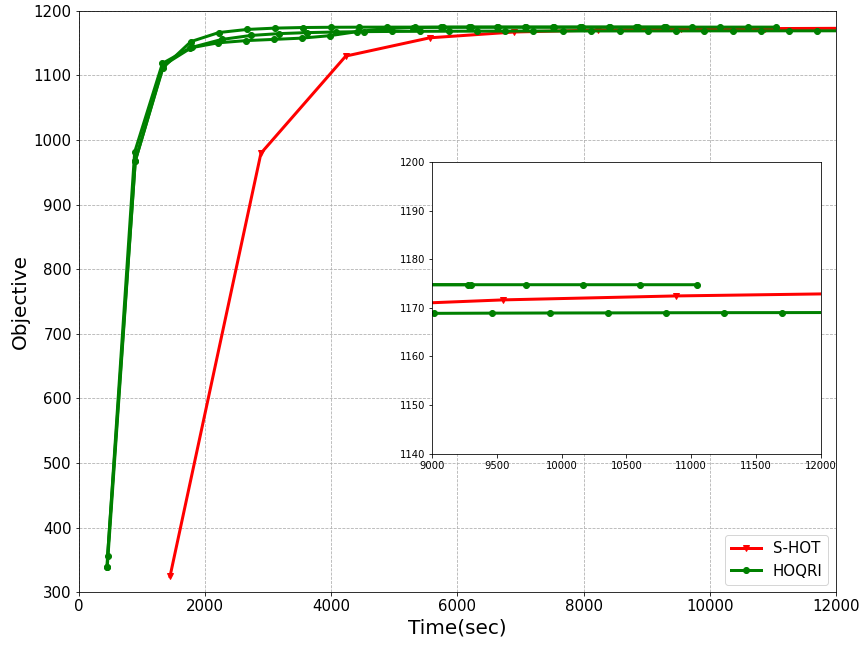}
    \caption{Rank $10\times10\times10$.} 
    \label{10M_MovieLen}
\end{subfigure}
\hfill
\begin{subfigure}[t]{0.49\linewidth}
    \centering
    \includegraphics[width=\linewidth]{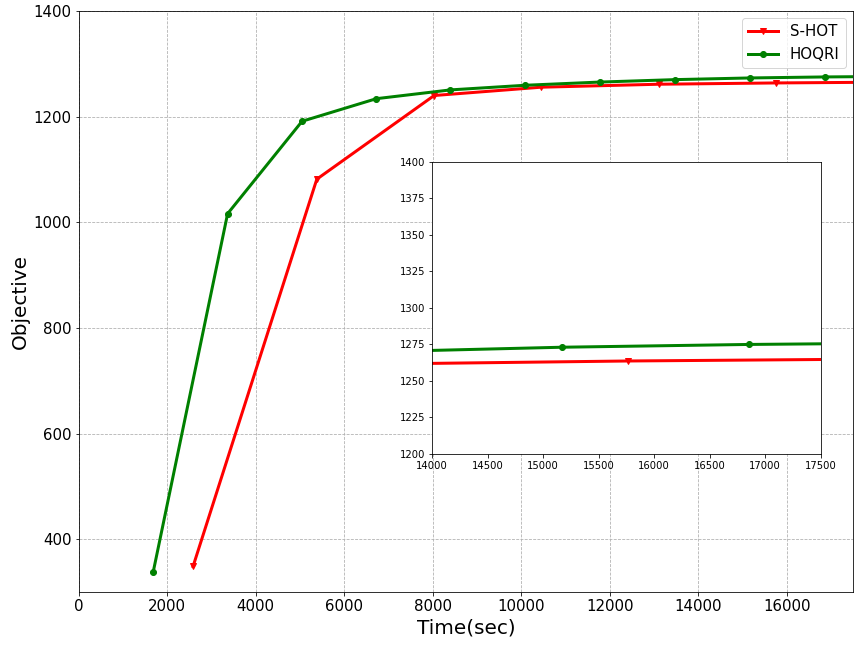}
    \caption{Rank $20\times20\times10$.} 
    \label{10M_MovieLen202010}
\end{subfigure}
\caption{Convergence on the 10M\_Movielen data set.} 
\end{figure}

\section{Conclusions}
\label{sec:conclusions}
In this paper, we proposed a new algorithm for the tensor Tucker decomposition called higher-order QR iteration (HOQRI). Compared to the celebrated higher-order orthogonal iteration (HOOI), HOQRI does not require a separate subroutine for the singular value decomposition. More importantly, HOQRI entirely resolves the intermediate memory explosion when computing the Tucker decomposition of very large and sparse tensors by defining a new tensor operation TTMcTC. Using concepts from manifold optimization, we show that HOQRI converges to a stationary point, which is a property that was not established for the celebrated HOOI.
Its outstanding performance is demonstrated on synthetic and real data in both MATLAB and C++ environment.

\appendix
\section{Supporting results for Section~\ref{sec:conv}}
\begin{lemma}\label{lmm:interlacing}
Denoting the singular values of $\bfA^\mode{n}_t$, in descending order, as $\sigma_1\geq\cdots\geq\sigma_{K_n}$ and the eigenvalues of $\bfG_{\mode{n}\,t}\bfG_{\mode{n}\,t}^\T$ as $\lambda_1\geq\cdots\geq\lambda_{K_n}$, then we have
\[
\sigma_k\geq\lambda_k, k=1,\ldots,K_n.
\]
\end{lemma}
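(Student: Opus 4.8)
The plan is to reduce the statement to a comparison of eigenvalues of two symmetric matrices and then invoke monotonicity of eigenvalues under the positive semidefinite (PSD) order. Recall from the matrix form of $\NewProd$ that, writing $\bfM=\bfY_{\mode{n}\,t}\bfY_{\mode{n}\,t}^\T$ (a symmetric PSD matrix) and abbreviating $\bfU=\bfU^\mode{n}_t$ (which has orthonormal columns), we have $\bfA^\mode{n}_t=\bfM\bfU$ and $\bfG_{\mode{n}\,t}\bfG_{\mode{n}\,t}^\T=\bfU^\T\bfM\bfU$. Hence the squared singular values of $\bfA^\mode{n}_t$ are the eigenvalues of $\bfA^{\mode{n}\,\T}_t\bfA^\mode{n}_t=\bfU^\T\bfM^2\bfU$, so $\sigma_k^2$ is the $k$th largest eigenvalue of $\bfU^\T\bfM^2\bfU$, while $\lambda_k$ is the $k$th largest eigenvalue of $\bfU^\T\bfM\bfU$. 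Since both sides of the desired inequality are nonnegative, it suffices to prove $\sigma_k^2\geq\lambda_k^2$, that is, that the $k$th largest eigenvalue of $\bfU^\T\bfM^2\bfU$ dominates the $k$th largest eigenvalue of $(\bfU^\T\bfM\bfU)^2$ (using that $\bfU^\T\bfM\bfU$ is PSD, so squaring it squares each eigenvalue while preserving their order).

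The key step I would establish is the matrix inequality $\bfU^\T\bfM^2\bfU\succeq(\bfU^\T\bfM\bfU)^2$ in the PSD order. This follows from a direct computation: the difference equals $\bfU^\T\bfM(\bfI-\bfU\bfU^\T)\bfM\bfU$, and because $\bfI-\bfU\bfU^\T$ is an orthogonal projector (idempotent and symmetric, using $\bfU^\T\bfU=\bfI$) it is PSD, so the difference factors as $(\,(\bfI-\bfU\bfU^\T)\bfM\bfU\,)^\T(\,(\bfI-\bfU\bfU^\T)\bfM\bfU\,)\succeq0$. Equivalently, one may note $\bfU\bfU^\T\preceq\bfI$, whence $\bfM\bfU\bfU^\T\bfM\preceq\bfM^2$, and conjugating by $\bfU$ gives $(\bfU^\T\bfM\bfU)^2\preceq\bfU^\T\bfM^2\bfU$.

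With the PSD inequality in hand, I would finish by invoking eigenvalue monotonicity under the PSD order (a consequence of the Courant--Fischer min--max characterisation, in the same circle of ideas as the compression/Cauchy interlacing principle the paper cites): if symmetric matrices satisfy $\bfX\succeq\bfY$, then the $k$th largest eigenvalue of $\bfX$ is at least that of $\bfY$ for every $k$. Applying this with $\bfX=\bfU^\T\bfM^2\bfU$ and $\bfY=(\bfU^\T\bfM\bfU)^2$ yields $\sigma_k^2\geq\lambda_k^2$, and taking nonnegative square roots gives $\sigma_k\geq\lambda_k$ for all $k$, as claimed. This argument automatically covers the degenerate case in which $\bfA^\mode{n}_t$ has fewer than $K_n$ nonzero singular values, since then $\operatorname{rank}(\bfU^\T\bfM\bfU)\le\operatorname{rank}(\bfM\bfU)$ forces the corresponding $\lambda_k$ to vanish as well.

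I expect the only genuine subtlety --- the main obstacle --- to be getting the reduction right rather than any hard estimate: one must be careful that squared singular values of the \emph{nonsymmetric} product $\bfM\bfU$ coincide with eigenvalues of $\bfU^\T\bfM^2\bfU$, and that squaring commutes with the eigenvalue map only because $\bfU^\T\bfM\bfU$ is PSD, so that the ordering of eigenvalues is preserved under squaring. Once these two bookkeeping points are handled, the PSD inequality and the monotonicity step are routine.
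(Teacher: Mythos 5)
Your proof is correct, but it takes a genuinely different route from the paper's. The paper works with the $I_n\times I_n$ matrix $\bfA^\mode{n}_t\bfA^{\mode{n}\,\T}_t$ (eigenvalues $\sigma_k^2$ padded with zeros), observes via \eqref{eq:AU} that its compression $\bfU^{\mode{n}\,\T}_t\bfA^\mode{n}_t\bfA^{\mode{n}\,\T}_t\bfU^\mode{n}_t$ equals $(\bfG_{\mode{n}\,t}\bfG_{\mode{n}\,t}^\T)^2$, and invokes the upper half of the Cauchy interlacing (Poincar\'e separation) theorem to get $\sigma_k^2\geq\lambda_k^2$ in one stroke. You instead work with the $K_n\times K_n$ Gram matrices $\bfA^{\mode{n}\,\T}_t\bfA^\mode{n}_t=\bfU^\T\bfM^2\bfU$ and $(\bfU^\T\bfM\bfU)^2$, prove the Loewner-order inequality $\bfU^\T\bfM^2\bfU\succeq(\bfU^\T\bfM\bfU)^2$ (a Kadison-type inequality for the compression map) by factoring the difference as the Gram matrix of $(\bfI-\bfU\bfU^\T)\bfM\bfU$, and then use Weyl monotonicity of eigenvalues under the PSD order. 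Both arguments ultimately rest on Courant--Fischer, but yours is self-contained and needs only one-sided eigenvalue monotonicity rather than the full interlacing theorem; it also exposes the mechanism more explicitly, since $(\bfI-\bfU\bfU^\T)\bfM\bfU=(\bfI-\bfU\bfU^\T)\bfA^\mode{n}_t$ is (up to the factor $2$) exactly the Riemannian gradient \eqref{eq:tucker_grad}, so taking the trace of your difference matrix recovers identity \eqref{eq:grad2} directly. The paper's version is shorter given the cited theorem, and its padded-spectrum bookkeeping also delivers the unused lower bounds $\lambda_k^2\geq\beta_{I_n-K_n+k}$ for free. Your reduction steps (squared singular values of the nonsymmetric $\bfM\bfU$ as eigenvalues of $\bfU^\T\bfM^2\bfU$, and order preservation under squaring of a PSD matrix) are handled correctly, so there is no gap.
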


We will be using the Cauchy interlacing theorem \cite[pp.118]{bellman1997introduction}, also known as the Poincar{\' e} separation theorem, stated as follows:
\begin{theorem}\label{thm:interlacing}
Let $\bfB\in\bbR^{n\times n}$ be a symmetric matrix with eigenvalues in descending order $\beta_1\geq\cdots\geq\beta_n$ and $\bfP\in\bbR^{n\times k}$ with orthonormal columns, i.e., $\bfP^\T\bfP=\bfI$. Denote the eigenvalues of $\bfP^\T\bfB\bfP\in\bbR^{k\times k}$ in descending order as $\gamma_1\geq\cdots\geq\gamma_k$, then for all $j\leq k$:
\[
\beta_j\geq\gamma_j\geq\beta_{n-k+j}.
\]
\end{theorem}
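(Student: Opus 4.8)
The plan is to derive both inequalities from the Courant--Fischer (min--max) variational characterization of the eigenvalues of a symmetric matrix, which I take as a standard result. For a symmetric $\bfB\in\bbR^{n\times n}$ with eigenvalues $\beta_1\geq\cdots\geq\beta_n$, Courant--Fischer gives, for each $j$,
\[
\beta_j=\max_{\dim\cS=j}\ \min_{\substack{\bfx\in\cS\\ \|\bfx\|=1}}\bfx^\T\bfB\bfx
=\min_{\dim\cS=n-j+1}\ \max_{\substack{\bfx\in\cS\\ \|\bfx\|=1}}\bfx^\T\bfB\bfx,
\]
where $\cS$ ranges over linear subspaces of $\bbR^n$ of the indicated dimension, and analogously for $\bfC\triangleq\bfP^\T\bfB\bfP$ with eigenvalues $\gamma_1\geq\cdots\geq\gamma_k$ over subspaces of $\bbR^k$. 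The single structural fact I would exploit is that, because $\bfP^\T\bfP=\bfI$, the linear map $\bfy\mapsto\bfP\bfy$ is an isometry from $\bbR^k$ onto the column space $\cR(\bfP)$: it preserves norms, $\|\bfP\bfy\|=\|\bfy\|$, and it preserves the quadratic form, $\bfy^\T\bfC\bfy=(\bfP\bfy)^\T\bfB(\bfP\bfy)$. Consequently it carries $j$-dimensional subspaces of $\bbR^k$ bijectively onto $j$-dimensional subspaces of $\cR(\bfP)$, so the variational problems defining $\gamma_j$ become the very same variational problems for $\bfB$, but with the competing subspaces constrained to lie inside $\cR(\bfP)$.

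For the upper bound $\gamma_j\leq\beta_j$, I would use the max--min form. Writing each feasible subspace $\cT\subseteq\bbR^k$ as its image $\bfP\cT\subseteq\cR(\bfP)$ and substituting the quadratic-form identity yields
\[
\gamma_j=\max_{\substack{\cV\subseteq\cR(\bfP)\\ \dim\cV=j}}\ \min_{\substack{\bfx\in\cV\\ \|\bfx\|=1}}\bfx^\T\bfB\bfx
\ \leq\ \max_{\substack{\cV\subseteq\bbR^n\\ \dim\cV=j}}\ \min_{\substack{\bfx\in\cV\\ \|\bfx\|=1}}\bfx^\T\bfB\bfx=\beta_j,
\]
the inequality holding because the outer maximization on the left is taken over a subcollection of the $j$-dimensional subspaces available on the right.

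For the lower bound $\gamma_j\geq\beta_{n-k+j}$, I would instead use the min--max form at index $j$ for $\bfC$, which optimizes over $(k-j+1)$-dimensional subspaces. The same isometry rewrites it as a minimization over $(k-j+1)$-dimensional subspaces of $\cR(\bfP)$, and restricting a \emph{minimization} to a subcollection can only \emph{increase} its value:
\[
\gamma_j=\min_{\substack{\cV\subseteq\cR(\bfP)\\ \dim\cV=k-j+1}}\ \max_{\substack{\bfx\in\cV\\ \|\bfx\|=1}}\bfx^\T\bfB\bfx
\ \geq\ \min_{\substack{\cV\subseteq\bbR^n\\ \dim\cV=k-j+1}}\ \max_{\substack{\bfx\in\cV\\ \|\bfx\|=1}}\bfx^\T\bfB\bfx=\beta_{n-k+j},
\]
where the last equality is the min--max form of $\beta_{n-k+j}$, since $n-(n-k+j)+1=k-j+1$. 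The two displays together give $\beta_j\geq\gamma_j\geq\beta_{n-k+j}$ for every $j\leq k$.

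The part that needs the most care is the dimension bookkeeping: verifying that $\bfy\mapsto\bfP\bfy$ is injective (so dimensions are preserved when passing between $\bbR^k$ and $\cR(\bfP)$), matching the subspace dimension $n-j+1$ in Courant--Fischer for $\bfB$ against the dimension $k-j+1$ coming from $\bfC$ (which forces the shifted index $n-k+j$ rather than $n-j+1$ in the lower bound), and being consistent about the direction in which restricting the feasible set moves a max versus a min. Once these indices are lined up, both inequalities follow from the single monotonicity principle that enlarging the feasible set raises a supremum and lowers an infimum.
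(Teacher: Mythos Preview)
Your proof via Courant--Fischer is correct and is the standard route to the Poincar{\'e}/Cauchy interlacing inequalities. Note, however, that the paper does not actually prove this theorem: it is quoted as a known result with a citation to Bellman's text and then used as a tool in the proof of Lemma~\ref{lmm:interlacing}. So there is no ``paper's own proof'' to compare against; your argument supplies what the paper omits, and the min--max approach you chose is exactly the one most textbooks (including the cited reference) would give.
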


\begin{proof}[Proof of Lemma~\ref{lmm:interlacing}]
Consider the matrix $\bfA^\mode{n}_t\bfA^{\mode{n}\,\T}_t\in\bbR^{I_n\times I_n}$, it has $K_n$ nonzero eigenvalues $\sigma_1^2\geq\cdots\geq\sigma_{K_n}^2$ and $I_n-K_n$ zero eigenvalues. 

Given $\bfU^\mode{n}_t$ with orthonormal columns, we have
\begin{equation}\label{eq:UAAU}
\bfU^{\mode{n}\,\T}_t\bfA^\mode{n}_t\bfA^{\mode{n}\,\T}_t\bfU^\mode{n}_t
= \bfG_{\mode{n}\,t}\bfG_{\mode{n}\,t}^\T\bfG_{\mode{n}\,t}\bfG_{\mode{n}\,t}^\T
\in\bbR{K_n\times K_n}.
\end{equation}
As we have defined the eigenvalues of $\bfG_{\mode{n}\,t}\bfG_{\mode{n}\,t}^\T$, the eigenvalues of \eqref{eq:UAAU} in descending order are $\lambda_1^2\geq\cdots\geq\lambda_{K_n}^2$.

Applying the Cauchy interlacing theorem on the symmetric matrix $\bfA^\mode{n}_t\bfA^{\mode{n}\,\T}_t$ and orthonormal matrix $\bfU^\mode{n}_t$, we conclude that
\[
\sigma_k^2\geq\lambda_k^2, k=1,\ldots,K_n.
\]

Finally, singular values are by definition nonnegative, and the eigenvalues of $\bfG_{\mode{n}\,t}\bfG_{\mode{n}\,t}^\T$ are also nonnegative since it is positive semidefinite, this completes the proof of Lemma~\ref{lmm:interlacing}.
\end{proof}

\bibliographystyle{plain}
\bibliography{refs}
\end{document}